\newcommand{\msc}[1]{\href{https://zbmath.org/classification/?q=#1}{#1}}
\crefname{main} {Theorem}       {Theorems}
\crefname{thm}  {Theorem}       {Theorems}
\crefname{lem}  {Lemma}         {Lemmas}
\crefname{prop} {Proposition}   {Propositions}
\crefname{hyp}  {Hypothesis}    {Hypotheses}
\crefname{exm}  {Example}       {Examples}
\theoremstyle{plain}
\newtheorem{main} {Theorem}
\newtheorem{thm} {Theorem} [section]
\newtheorem{prop}   [thm] {Proposition}
\newtheorem{lem}    [thm] {Lemma}
\theoremstyle{definition}
\newtheorem{hyp}    [thm] {Hypothesis}
\newtheorem{exm}    [thm] {Example}
\newtheorem*{prob}        {Problem}
\numberwithin{equation}{section}
\newcommand*{\case}[1]{\paragraph{\indent\textit{#1}}}
\newcommand{\CC}{\mathbb{C}}
\newcommand{\QQ}{\mathbb{Q}}
\newcommand{\ZZ}{\mathbb{Z}}
\newcommand{\totient}{\phi}
\newcommand{\idealm}{\mathfrak{m}}
\newcommand{\idealn}{\mathfrak{n}}
\newcommand{\idealr}{\mathfrak{r}}
\DeclarePairedDelimiterX\set[1]\lbrace\rbrace{\,\def\given{\mid}#1\,}
\DeclarePairedDelimiterX\gen[1]\langle\rangle{\,\def\given{\mid}#1\,}
\DeclareMathOperator{\lcm}{lcm}
\DeclareMathOperator{\Ker}{Ker}
\newcommand{\CS}{\mathscr{C}}                
\begin{document}

\title[Where isomorphisms of group algebras fail to lift]{Where isomorphisms of group algebras\\
    fail to lift}

\author[L.~Margolis]{Leo Margolis\,
}
\address[Leo Margolis]
{Universidad Aut\'onoma de Madrid, Departamento de Matem\'aticas,  C/ Francisco Tom\'as y Valiente 7, Facultad de Ciencias, m\'odulo 17, 28049 Madrid, Spain.}
\email{leo.margolis@icmat.es}

\author[T.~Sakurai]{Taro Sakurai\,
}
\address[Taro Sakurai]
{Department of Mathematics and Informatics, Graduate School of Science, Chiba University, 1-33, Yayoi-cho, Inage-ku, Chiba-shi, Chiba, 263-8522, Japan.}
\email{tsakurai@math.s.chiba-u.ac.jp}

\thanks{
    The first author acknowledges financial support from the Spanish Ministry of Science and Innovation, through the Ram\'on y Cajal grant program.
    He would also like to thank Chiba University for its hospitality.
}

\subjclass[2020]{
    Primary \msc{20C05};
    Secondary \msc{16S34}, \msc{20D15}, \msc{16U60}%
}

\keywords{Isomorphism problem, group algebras, counterexamples, finite $2$-groups, ground rings.}

\date{\today}

\begin{abstract}
    Counterexamples to the Modular Isomorphism Problem were discovered recently.
    These are non-isomorphic finite $2$-groups $G$ and $H$ that have isomorphic group algebras over the field $\mathbb{Z}/2\mathbb{Z}$ and non-isomorphic group algebras over the $2$-adic integers $\mathbb{Z}_2$.
    We show that the groups $G$ and $H$ already have non-isomorphic group algebras over the ring $\mathbb{Z}/4\mathbb{Z}$.
\end{abstract}

\maketitle


\section{Introduction}
\index{$\ZZ$ : Commutative Ring}
\index{$p$ : Prime Number}
\index{$\ZZ_p$ : Commutative Ring}
\index{$G$ : Finite Group}
\index{$H$ : Finite Group}
\index{$k$ : Positive Integer}
\index{$P$ : Finite Group}

The Integral Isomorphism Problem asks whether non-isomorphic finite groups have non-isomorphic group algebras over the integers~$\ZZ$.
This problem was proposed by Higman~\cite{Higman40} and he proved that it has a positive solution for finite abelian groups.
For a prime~$p$, Roggenkamp--Scott~\cite{RoggenkampScott87} and Weiss~\cite{Weiss88} proved that non-isomorphic finite $p$-groups have non-isomorphic group algebras even over the $p$-adic integers~$\ZZ_p$.
This result is sharp, because Hertweck~\cite{Hertweck01} discovered a counterexample among finite solvable groups whose orders are divisible by exactly two distinct primes.
In characteristic~$p$, the Modular Isomorphism Problem asks whether non-isomorphic finite $p$-groups have non-isomorphic group algebras over the field~$\ZZ/p\ZZ$.
For a survey, see \cite{Margolis22}.
Deskins~\cite{Deskins56} proved that this problem has a positive solution for finite abelian $p$-groups.
Recently Garc\'{\i}a-Lucas--Margolis--del~R\'{\i}o~\cite{GarciaLucasMargolisDelRio22} discovered counterexamples to the problem, later generalized in~\cite{BaginskiZabielski25, MargolisSakurai25}.
These are non-isomorphic finite $2$-groups $G$ and $H$ that have isomorphic group algebras over~$\ZZ/2\ZZ$.
By contrast, the groups $G$ and $H$ have non-isomorphic group algebras over~$\ZZ_2$.
Indeed, there is a Maranda-type theorem \cite[(III.5.10)]{Sehgal78} and the same is true for $\ZZ/2^k\ZZ$ when $k$ is sufficiently large.
Sehgal \cite[(III.5.11)]{Sehgal78} asked whether every isomorphism between modular group algebras lifts to an isomorphism between $p$-adic group algebras for finite $p$-groups (see also \cite[Question~4.9]{Linckelmann18} for Morita equivalences of blocks.)
The recent counterexamples answer this question in the negative, and hence there must be a minimal~$k \geq 2$ such that the groups $G$ and $H$ have non-isomorphic group algebras over $\ZZ/2^k\ZZ$.
\begin{equation*}
    \begin{tikzcd}[sep=small]
        \ZZ/2\ZZ \, G
         & \dotsb                \arrow[l, two heads]
         & \ZZ/2^{k - 1}\ZZ \, G \arrow[l, two heads]
         & \ZZ/2^k\ZZ \, G       \arrow[l, two heads]
         & \ZZ/2^{k + 1}\ZZ \, G \arrow[l, two heads]
         & \dotsb                \arrow[l, two heads]
         & \ZZ_2 \, G            \arrow[l, two heads]
        \\
        \ZZ/2\ZZ \, H                                 \arrow[u, phantom, sloped, "\cong"]
         & \dotsb                \arrow[l, two heads]
         & \ZZ/2^{k - 1}\ZZ \, H \arrow[l, two heads] \arrow[u, phantom, sloped, "\cong"]
         & \ZZ/2^k\ZZ \, H       \arrow[l, two heads] \arrow[u, phantom, sloped, "\not\cong"]
         & \ZZ/2^{k + 1}\ZZ \, H \arrow[l, two heads] \arrow[u, phantom, sloped, "\not\cong"]
         & \dotsb                \arrow[l, two heads]
         & \ZZ_2 \, H            \arrow[l, two heads] \arrow[u, phantom, sloped, "\not\cong"]
    \end{tikzcd}
\end{equation*}
In this paper, we prove that the groups $G$ and $H$ already have non-isomorphic group algebras over $\ZZ/4\ZZ$.
More generally, we prove the following.

\begin{main}\label{main:A}
    \index{$R$ : Commutative Ring}
    \index{$\idealr$ : Ideal of Ring}
    \index{$n$ : Positive Integer}
    \index{$m$ : Positive Integer}
    \index{$\ell$ : Positive Integer}
    \index{$x$ : Group Element}
    \index{$y$ : Group Element}
    \index{$z$ : Group Element}
    \index{$a$ : Group Element}
    \index{$b$ : Group Element}
    \index{$c$ : Group Element}
    \index{$\idealm$ : Ideal of Ring}

    Let $R$ be a commutative ring with an ideal $\idealr$.
    Suppose $n > m > \ell \geq 2$ and let
    \begin{alignat*}{8}
         &G &\  &= &\ \langle\, x, y, z \mid x^{2^n} &= 1, &\ y^{2^m} &= 1,       &\ z^{2^\ell} &= 1, &\ [y, x] &= z, &\ [z, x] &= z^{-2}, &\ [z, y] &= z^{-2} \,\rangle, \\
         &H &\  &= &\ \langle\, a, b, c \mid a^{2^n} &= 1, &\ b^{2^m} &= a^{2^m}, &\ c^{2^\ell} &= 1, &\ [b, a] &= c, &\ [c, a] &= c^{-2}, &\ [c, b] &= c^{-2} \,\rangle.
    \end{alignat*}
    If $R$ has a maximal ideal $\idealm$ such that $\idealm^2 \subseteq \idealr \subseteq \idealm$ and $R/\idealr$ has characteristic~$4$, then $RG \not\cong RH$.
    On the other hand, if $R$ has characteristic~$2$, then $RG \cong RH$.
\end{main}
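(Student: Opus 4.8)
The plan is to treat the two assertions separately, since they exploit opposite features of the coefficient ring. I begin with the easier one, that $RG \cong RH$ when $R$ has characteristic~$2$. Any such $R$ is an $\mathbb{F}_2$-algebra, and $RP = R \otimes_{\mathbb{F}_2} \mathbb{F}_2 P$ for $P \in \set{G, H}$, so it suffices to produce a single $\mathbb{F}_2$-algebra isomorphism $\mathbb{F}_2 G \cong \mathbb{F}_2 H$ and extend scalars along $\mathbb{F}_2 \to R$. Such an isomorphism is exactly what makes $G$ and $H$ counterexamples to the Modular Isomorphism Problem in \cite{GarciaLucasMargolisDelRio22, MargolisSakurai25}; for the present presentations I would record an explicit formula, realised as a change of generators in which the characteristic-$2$ Frobenius identity $(1 + w)^{2^m} = 1 + w^{2^m}$ absorbs the discrepancy $b^{2^m} = a^{2^m}$ (which in $G$ is the trivial relation $y^{2^m} = 1$). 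The remaining relations and bijectivity are then verified directly.

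For the non-isomorphism I would first pass to the quotient ring. If $RG \cong RH$ as $R$-algebras, then applying $- \otimes_R R/\idealr$ yields $(R/\idealr)G \cong (R/\idealr)H$, so it suffices to prove the claim for $\bar R := R/\idealr$. Writing $\bar{\idealm} := \idealm/\idealr$, the hypotheses give $\bar{\idealm}^2 = 0$, a residue field $k := \bar R/\bar{\idealm}$ of characteristic~$2$, and $\mathrm{char}(\bar R) = 4$; the guiding example is $\bar R = \ZZ/4\ZZ$. The crucial new feature is the element $2$: it is nonzero, lies in $\bar{\idealm}$, and satisfies $2^2 = 0$ and $2\bar{\idealm} = 0$. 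Consequently $2\bar R P$ is a square-zero ideal of $\bar R P$ isomorphic as a $kP$-bimodule to $kP$, for $P \in \set{G, H}$, so that $\bar R P$ is a characteristic-$4$ thickening of the characteristic-$2$ algebra to which it reduces.

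The heart of the matter is to extract from this thickening an isomorphism invariant separating $G$ from $H$, notwithstanding $kG \cong kH$. The mechanism is a Bockstein-type relation: in $\bar R P$ one has $(1 - g)^2 = (1 - g^2) - 2g(1 - g)$ for every group element $g$, so squaring on the augmentation ideal equals, modulo $2$, the map $(1 - g) \mapsto (1 - g^2)$, but carries a canonical correction lying in $2\bar R P \cong kP$. Iterating, the class of $(1 - g)^{2^m}$ is linked to that of $1 - g^{2^m}$ through $2$-divisible corrections governed by the commutator relations $[y, x] = z$, $[z, x] = z^{-2}$, $[z, y] = z^{-2}$ and their analogues in $H$. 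Applied to the generators responsible for the difference, this links $(1 - y)^{2^m}$ in $RG$ to $1 - y^{2^m} = 0$, while in $RH$ it links $(1 - b)^{2^m}$ to the nonzero central class $1 - a^{2^m}$ (here $m < n$ guarantees $a^{2^m} \neq 1$). I would encode this discrepancy as a homomorphism on an isomorphism-invariant subquotient of $\bar R P$ built from the augmentation filtration together with the operator ``multiplication by~$2$'', compute it on both groups, and exhibit the disagreement.

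The main obstacle is to define this invariant in a manifestly isomorphism-invariant fashion and to verify that the characteristic-$4$ corrections genuinely survive: one must guarantee that the distinguishing class is not absorbed by the ambiguity in the characteristic-$2$ isomorphism $kG \cong kH$, i.e. that it lives in a subquotient insensitive to the choice of lift. This is exactly where $\mathrm{char}(\bar R) = 4$ is used, in contrast to characteristic~$2$, where the correction $2g(1 - g)$ vanishes and the invariant collapses---consistent with the coexisting isomorphism $RG \cong RH$ of the second assertion.
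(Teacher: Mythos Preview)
Your reduction to $S = R/\idealr$ and your treatment of the characteristic-$2$ assertion are fine and agree with the paper. The gap is in the non-isomorphism half: the Bockstein/power-relation invariant you sketch does not, by itself, separate $SG$ from $SH$. Concretely, your plan hinges on the contrast between $(y-1)^{2^m}$ (which reduces to $0$ in $kG$ because $y^{2^m}=1$) and $(b-1)^{2^m}$ (which reduces to $a^{2^m}-1 \neq 0$ in $kH$). But an isomorphism need not carry $y$ to $b$: in $SH$ the element $a^{-1}b - 1$ lies in $\Theta \setminus \Theta^2$ and, since $(a^{-1}b)^{2^m} = 1$, satisfies $(a^{-1}b - 1)^{2^m} \in 2SH$ as well. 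So the property ``there is $w \in \Theta \setminus \Theta^2$ with $w^{2^m}$ divisible by $2$'' holds for both algebras, and the discrepancy you isolate is absorbed precisely by the ambiguity you flag in your last paragraph.

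The paper closes this gap by bringing in a second, independent constraint that your outline omits: the centrality of $y^2$ in $G$. Assuming a normalized isomorphism $\psi\colon SG \to SH$ and writing $\psi(y-1) \equiv \alpha A + \beta B \bmod \Theta^2$, the power relation $y^{2^m}=1$ (used at exponent $2^{m+1}$ so that $(y-1)^{2^{m+1}} = 0$ exactly) forces $\alpha + \beta = 0$; but it is the requirement that $\psi(y-1)^2 + 2\psi(y-1)$ commute with $A$ and with $B$---a computation in $\Theta^3/\Theta^4$ using $[C,A] \equiv [C,B] \equiv 2C$---that yields the extra equations $\beta + \beta^2 + \alpha\beta = 0$ and $\alpha + \alpha^2 + \alpha\beta = 0$. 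Only the three together force $(\alpha,\beta) = (0,0)$, contradicting $y-1 \notin \Theta^2$. Your plan would need to incorporate this centrality ingredient (or an equivalent second relation) before it can succeed.
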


The non-isomorphic groups $G$ and $H$ of order $2^{n + m + \ell}$ in the theorem are essentially\footnote{Taking the same power of these groups and taking the direct product with a common finite $2$-group $P$ also yields counterexamples for a trivial reason.
    So the only known counterexamples are of the form $G \times \dotsb \times G \times P$ and $H \times \dotsb \times H \times P$.
} the only known counterexamples to the Modular Isomorphism Problem.
Since our ground ring is no longer a field, we lay the groundwork for our analysis in \cref{sec:Preliminaries}.
The theorem is proved in \cref{sec:Approximation}.
Our proof is based on a group base approximation used in~\cite{MargolisSakurai25}.
In \cref{sec:Rational} we study the rational and complex cases, which are not covered in \cref{main:A}, and show that the situation is more delicate when $2$ is a unit in the ground ring.

\section{Preliminaries}\label{sec:Preliminaries}
\index{$G$ : Group}
\index{$Z(G)$ : Group}
\index{$\Phi(G)$ : Group}
\index{$n$ : Positive Integer}
\index{$\gamma_n(G)$ : Group}
\index{$g$ : Group Element}
\index{$h$ : Group Element}
\index{$[g, h]$ : Group Element}
\index{$g^h$ : Group Element}
\index{$g^G$ : Set}
\index{$\vert g \vert$ : Positive Integer}
\index{$K$ : Group}
\index{$C_G(K)$ : Group}
\index{$N_G(K)$ : Group}
\index{$C_n$ : Group}
\index{$p$ : Prime Number}
\index{$k$ : Nonnegative Integer}
\index{$G^{p^k}$ : Group}
\index{$\Omega_k(G)$ : Group}
\index{$N$ : Group}
\index{$\Omega(G : N)$ : Group}
\index{$Z(SG)$ : Algebra}
\index{$A$ : Algebra Element}
\index{$B$ : Algebra Element}
\index{$S$ : Commutative Ring}
\index{$\varepsilon_{SG}$ : Algebra Homomorphism}
\index{$\alpha_g$ : Ring Element}
\index{$\Delta(SG)$ : Ideal of Algebra}
\index{$\idealn$ : Ideal of Ring}
\index{$F$ : Commutative Ring}
\index{$\Delta(SG : \idealn)$ : Ideal of Algebra}
\index{$\pi$ : Ring Homomorphism}
\index{$\hat\pi$ : Algebra Homomorphism}

We use standard group-theoretical notation.
For a group $G$, we write its center as $Z(G)$, its derived subgroup as $G'$ and its Frattini subgroup as $\Phi(G)$.
The $n$-th term of the lower central series is denoted by $\gamma_n(G)$.
For $g, h \in G$, the commutator is defined as $[g, h] = g^{-1}h^{-1}gh$ and the conjugate is defined as $g^h = h^{-1}gh$.
The conjugacy class of $g$ in $G$ is denoted by $g^G$ and the order of $g$ is denoted by $|g|$.
For a subgroup $K \leq G$, its centralizer and normalizer in $G$ are denoted by $C_G(K)$ and $N_G(K)$.
Let $p$ be a prime and suppose that $G$ is a finite $p$-group.
For a normal subgroup $N \trianglelefteq G$, we define
\begin{equation*}
    \Omega(G : N) = \gen{ g \in G \given g^p \in N }.
\end{equation*}
For a non-negative integer $k$, we define $\Omega_k(G) = \gen{ g \in G \given g^{p^k} = 1 }$ and $G^{p^k} = \gen{ g^{p^k} \given g \in G }$.
A cyclic group of order $n$ is denoted by $C_n$.

For a commutative ring $S$ and a finite group $G$, we write the center of the group algebra $SG$ as $Z(SG)$.
For $A, B \in SG$, the Lie commutator is defined as $[A, B] = AB - BA$.
The augmentation ideal $\Delta(SG)$ of $SG$ is defined to be the kernel of a map
\begin{equation*}
    \varepsilon_{SG} \colon SG \to S, \ \ \sum_{g \in G} \alpha_g g \mapsto \sum_{g \in G} \alpha_g,
\end{equation*}
called the augmentation map.

For an ideal $\idealn$ of $S$, we set $F = S/\idealn$ and
\begin{equation*}
    \Delta(SG : \idealn)
    = \set{ x \in SG \given \varepsilon_{SG}(x) \in \idealn }
    = \idealn \oplus \Delta(SG).
\end{equation*}
Let $\pi \colon S \to F$ be the natural projection and extend it to $\hat\pi \colon SG \to FG$ in the natural manner.
Note that $\Ker \hat\pi = \idealn \oplus \idealn\Delta(SG)$.

In the rest of this section we will prove some preparatory lemmas which will then serve to study the concrete groups in the next section.
We start with a lemma which is well known, but seems not to be contained in this short and general form in the literature.

\begin{lem}\label{lem:reduce}
    \index{$R$ : Commutative Ring}
    \index{$S$ : Commutative Ring}
    \index{$G$ : Finite Group}
    \index{$H$ : Finite Group}

    Let $R$ and $S$ be commutative rings and $G$ and $H$ finite groups.
    Suppose that there is a ring homomorphism $R \to S$.
    If $RG \cong RH$, then $SG \cong SH$.
\end{lem}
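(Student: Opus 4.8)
The plan is to argue by extension of scalars along the given ring homomorphism $f \colon R \to S$, which turns $S$ into an $R$-algebra and in particular into an $R$-module. Throughout I read $RG \cong RH$ as an isomorphism of $R$-algebras, as is customary for the isomorphism problem; the argument will then produce an isomorphism of $S$-algebras, which is a fortiori a ring isomorphism.

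First I would record the standard base-change identity: for any finite group $G$ there is a natural isomorphism of $S$-algebras
\[
    S \otimes_R RG \;\xrightarrow{\ \sim\ }\; SG, \qquad s \otimes g \mapsto s\,g,
\]
induced by the $R$-balanced map $(s, \sum_g \alpha_g g) \mapsto \sum_g s\,f(\alpha_g)\,g$. Since $RG$ is a free $R$-module with basis $G$, the functor $S \otimes_R (-)$ carries this basis to the $S$-basis $G$ of $SG$, so the displayed map is an isomorphism of $S$-modules; that it respects multiplication follows from $(s \otimes g)(s' \otimes g') = ss' \otimes gg' \mapsto ss'\,gg'$, and unitality is clear. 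Now, given an $R$-algebra isomorphism $\varphi \colon RG \to RH$, applying $S \otimes_R (-)$ yields an $S$-algebra isomorphism $\mathrm{id}_S \otimes \varphi \colon S \otimes_R RG \to S \otimes_R RH$, and composing with the two instances of the base-change identity gives the desired isomorphism $SG \cong SH$.

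The only points demanding care are the verification that the base-change map is multiplicative and, more importantly, that $\varphi$ is $R$-linear so that $\mathrm{id}_S \otimes \varphi$ is defined — this is exactly where the convention that $RG \cong RH$ means an $R$-algebra isomorphism enters. Granting this, there is no genuine obstacle: the whole statement is just the functoriality of $S \otimes_R (-)$ combined with freeness of $RG$ over $R$. (One could equally phrase the proof without tensor products, by extending $f$ to the surjection $RG \to SG$, $\sum_g \alpha_g g \mapsto \sum_g f(\alpha_g) g$, and identifying $SG$ with the quotient $RG / (\Ker f)\,RG$; the tensor formulation is cleaner and makes the naturality transparent.)
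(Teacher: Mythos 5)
Your proof is correct and is essentially the paper's argument in functorial clothing: the paper defines $\psi(g)=\sum_{h}\rho(\alpha_{g,h})h$ directly from $\varphi(g)=\sum_h \alpha_{g,h}h$ and checks it is an $S$-algebra isomorphism, which is exactly the map $\mathrm{id}_S\otimes\varphi$ under your base-change identification $S\otimes_R RG\cong SG$. Your parenthetical remark at the end even describes the paper's coordinate version, so the two proofs coincide in substance.
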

\begin{proof}
    \index{$\rho$ : Ring Homomorphism}
    \index{$\varphi$ : Algebra Homomorphism}
    \index{$g$ : Group Element}
    \index{$h$ : Group Element}
    \index{$\alpha_{g, h}$ : Ring Element}
    \index{$\psi$ : Algebra Homomorphism}

    Let $\rho\colon R \to S$ be a ring homomorphism and $\varphi\colon RG \to RH$ an $R$-algebra isomorphism.
    For $g, h \in G$, take $\alpha_{g, h} \in R$ that satisfy $\varphi(g) = \sum_{h \in H} \alpha_{g, h} h$ and define $\psi\colon SG \to SH$ by $\psi(g) = \sum_{h \in H} \rho(\alpha_{g, h}) h$.
    It is straightforward to check that $\psi$ is indeed an $S$-algebra isomorphism whose inverse is defined similarly.
\end{proof}

\begin{lem}\label{lem:kernel}
    \index{$S$ : Commutative Ring}
    \index{$\idealn$ : Ideal of Ring}
    \index{$G$ : Finite Group}
    \index{$F$ : Commutative Ring}
    \index{$\pi$ : Ring Homomorphism}
    \index{$\hat\pi$ : Algebra Homomorphism}

    Let $S$ be a commutative ring with an ideal $\idealn$ and $G$ a finite group.
    Write $F = S/\idealn$, $\pi \colon S \to F$ and $\hat\pi \colon SG \to FG$ as before.
    Then
    \begin{equation*}
        \Delta(SG) \cap \hat\pi^{-1}(\Delta(FG)^2) = \idealn\Delta(SG) + \Delta(SG)^2.
    \end{equation*}
\end{lem}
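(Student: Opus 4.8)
The plan is to prove the two inclusions separately, relying throughout on the $S$-module decomposition $SG = S \cdot 1 \oplus \Delta(SG)$ coming from the splitting of the augmentation map, together with the description $\Ker \hat\pi = \idealn \oplus \idealn\Delta(SG)$ recorded just above the statement. The basic observation I would establish first is that $\hat\pi$ restricts to a surjection $\Delta(SG) \to \Delta(FG)$; this follows from the compatibility $\varepsilon_{FG} \circ \hat\pi = \pi \circ \varepsilon_{SG}$ together with the fact that $\hat\pi$ fixes each $g - 1$ and every element of $F$ lifts along $\pi$. Since $\hat\pi$ is moreover a surjective ring homomorphism, this upgrades to $\hat\pi(\Delta(SG)^2) = \Delta(FG)^2$, because the image of a power of an ideal under a surjective homomorphism equals the corresponding power of the image.

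For the inclusion $\supseteq$, I would simply check that both summands of the right-hand side land in the left-hand side. Both $\idealn\Delta(SG)$ and $\Delta(SG)^2$ are contained in $\Delta(SG)$. Applying $\hat\pi$, the first is killed since $\idealn\Delta(SG) \subseteq \Ker \hat\pi$, while the second maps onto $\Delta(FG)^2$; hence their sum lies in $\Delta(SG) \cap \hat\pi^{-1}(\Delta(FG)^2)$.

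The reverse inclusion $\subseteq$ carries the real content. Given $x \in \Delta(SG)$ with $\hat\pi(x) \in \Delta(FG)^2$, I would use the surjectivity $\hat\pi(\Delta(SG)^2) = \Delta(FG)^2$ to choose $y \in \Delta(SG)^2$ with $\hat\pi(y) = \hat\pi(x)$. Then $x - y \in \Ker \hat\pi$, and since both $x$ and $y$ lie in $\Delta(SG)$, in fact $x - y \in \Ker \hat\pi \cap \Delta(SG)$. The key step, and the one I expect to require the most care, is the identity
\begin{equation*}
    \Ker \hat\pi \cap \Delta(SG) = \idealn\Delta(SG).
\end{equation*}
I would deduce this from the two direct-sum decompositions: a general element of $\Ker \hat\pi$ has the form $s \cdot 1 + v$ with $s \in \idealn$ and $v \in \idealn\Delta(SG)$, and the extra requirement that it lie in $\Delta(SG)$ forces its augmentation $s$ to vanish, leaving $v \in \idealn\Delta(SG)$. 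Granting this, $x = y + (x - y) \in \Delta(SG)^2 + \idealn\Delta(SG)$, which completes the argument. The only place where anything could go wrong is the bookkeeping with the two splittings, so I would make sure the decomposition $SG = S \cdot 1 \oplus \Delta(SG)$ is stated explicitly before invoking it.
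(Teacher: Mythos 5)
Your proposal is correct and follows essentially the same route as the paper: both choose a preimage in $\Delta(SG)^2$ of $\hat\pi(x)$ (the paper does this by explicitly lifting the coefficients of an expression $\sum\alpha_{g,h}(g-1)(h-1)$, which is exactly your surjectivity claim $\hat\pi(\Delta(SG)^2)=\Delta(FG)^2$), then place the difference in $\Ker\hat\pi=\idealn\oplus\idealn\Delta(SG)$ and use membership in $\Delta(SG)$ to discard the $\idealn\cdot 1$ component. Your explicit identification $\Ker\hat\pi\cap\Delta(SG)=\idealn\Delta(SG)$ via the augmentation is just a slightly more spelled-out version of the paper's final step.
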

\begin{proof}
    \index{$A$ : Algebra Element}
    \index{$g$ : Group Element}
    \index{$h$ : Group Element}
    \index{$\alpha_{g, h}$ : Ring Element}
    \index{$\beta_{g, h}$ : Ring Element}

    It is easy to see that the right-hand side is contained in the left-hand side.
    To see the converse, take $A \in \Delta(SG) \cap \hat\pi^{-1}(\Delta(FG)^2)$.
    Then there are some $\alpha_{g, h} \in F$ such that
    \begin{equation*}
        \hat\pi(A) = \sum_{g \in G}\sum_{h \in G} \alpha_{g, h}(g - 1)(h - 1).
    \end{equation*}
    Since $\pi$ is surjective, there are some $\beta_{g, h} \in S$ such that $\pi(\beta_{g, h}) = \alpha_{g, h}$.
    Then
    \begin{equation*}
        A - \sum_{g \in G}\sum_{h \in G} \beta_{g, h}(g - 1)(h - 1) \in \Ker \hat\pi.
    \end{equation*}
    It follows from $(g - 1)(h - 1) \in \Delta(SG)^2$ and $\Ker \hat\pi = \idealn \oplus \idealn\Delta(SG)$ that $A \in \idealn \oplus (\idealn\Delta(SG) + \Delta(SG)^2)$.
    Since $A \in \Delta(SG)$, we conclude that $A \in \idealn\Delta(SG) + \Delta(SG)^2$.
\end{proof}

\begin{prop}\label{prop:RelAug}
    \index{$S$ : Commutative Ring}
    \index{$\idealn$ : Ideal of Ring}
    \index{$G$ : Finite Group}
    \index{$F$ : Commutative Ring}

    Let $S$ be a commutative ring with an ideal $\idealn$ and $G$ a finite group.
    Write $F = S/\idealn$.
    Then there is an isomorphism of $F$-modules
    \begin{equation*}
        \Delta(SG : \idealn)/\Delta(SG : \idealn)^2 \cong \idealn/\idealn^2 \oplus \Delta(FG)/\Delta(FG)^2.
    \end{equation*}
\end{prop}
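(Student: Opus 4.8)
The plan is to build an explicit $S$-linear surjection
\begin{equation*}
    \Phi \colon \Delta(SG : \idealn) \to \idealn/\idealn^2 \oplus \Delta(FG)/\Delta(FG)^2
\end{equation*}
whose kernel is exactly $\Delta(SG : \idealn)^2$, and then to invoke the first isomorphism theorem. Throughout I identify $\idealn$ with the scalars $\idealn \cdot 1 \subseteq SG$, in accordance with the decomposition $\Delta(SG : \idealn) = \idealn \oplus \Delta(SG)$. Since $\idealn \cdot \Delta(SG : \idealn) = \idealn^2 + \idealn\Delta(SG) \subseteq \Delta(SG : \idealn)^2$, both sides are naturally $F$-modules and any $S$-linear map between them descends to an $F$-linear map on the quotient; hence it suffices to argue $S$-linearly.

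First I would set $\Phi(x) = (\varepsilon_{SG}(x) + \idealn^2,\ \hat\pi(x) + \Delta(FG)^2)$. Both coordinates are well defined: by definition $\varepsilon_{SG}(x) \in \idealn$ for $x \in \Delta(SG : \idealn)$, and since $\varepsilon_{FG}(\hat\pi(x)) = \pi(\varepsilon_{SG}(x)) = 0$ we get $\hat\pi(x) \in \Delta(FG)$. Surjectivity is then quick: the scalars $s \cdot 1$ with $s \in \idealn$ realise the first summand (as $\hat\pi(s \cdot 1) = 0$), while surjectivity of $\hat\pi$ lets one lift any element of $\Delta(FG)$ to a preimage in $\Delta(SG : \idealn)$, so the second summand is hit as well; linearity combines the two.

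The crux is the kernel computation. Expanding the square along $\Delta(SG : \idealn) = \idealn \oplus \Delta(SG)$ gives
\begin{equation*}
    \Delta(SG : \idealn)^2 = \idealn^2 + \idealn\Delta(SG) + \Delta(SG)^2,
\end{equation*}
and applying $\varepsilon_{SG}$ and $\hat\pi$ termwise shows this lands in $\idealn^2$ and in $\Delta(FG)^2$ respectively, so $\Delta(SG : \idealn)^2 \subseteq \Ker \Phi$. For the reverse inclusion --- the step carrying the real content --- I would take $x \in \Ker \Phi$ and write $x = \varepsilon_{SG}(x) \cdot 1 + d$ with $d \in \Delta(SG)$. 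The condition $\Phi(x) = 0$ forces $\varepsilon_{SG}(x) \in \idealn^2$, whence $\varepsilon_{SG}(x) \cdot 1 \in \idealn^2$, and $\hat\pi(d) = \hat\pi(x) \in \Delta(FG)^2$. Thus $d \in \Delta(SG) \cap \hat\pi^{-1}(\Delta(FG)^2)$, and here \cref{lem:kernel} is exactly what is needed: it identifies this intersection with $\idealn\Delta(SG) + \Delta(SG)^2 \subseteq \Delta(SG : \idealn)^2$. Therefore $x \in \Delta(SG : \idealn)^2$, which closes the kernel computation and yields the desired isomorphism. I expect the only delicate point to be the bookkeeping of the identification $\idealn \leftrightarrow \idealn \cdot 1$; the genuine input is \cref{lem:kernel}.
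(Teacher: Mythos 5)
Your proposal is correct and takes essentially the same route as the paper: both arguments rest on the decomposition $\Delta(SG : \idealn) = \idealn \oplus \Delta(SG)$ and both use \cref{lem:kernel} as the key input to identify what dies modulo $\Delta(SG : \idealn)^2$. The only difference is packaging --- the paper computes $\Delta(SG : \idealn)^2 = \idealn^2 \oplus (\idealn\Delta(SG) + \Delta(SG)^2)$ directly and splits the quotient, whereas you realise the same isomorphism as the first isomorphism theorem applied to the explicit map $x \mapsto (\varepsilon_{SG}(x) + \idealn^2,\ \hat\pi(x) + \Delta(FG)^2)$.
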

\begin{proof}
    Recall that $\Delta(SG : \idealn) = \idealn \oplus \Delta(SG)$.
    As
    \begin{align*}
        \frac{\Delta(SG : \idealn)}{\Delta(SG : \idealn)^2}
        = \frac{\idealn \oplus \Delta(SG)}{\idealn^2 \oplus (\idealn\Delta(SG) + \Delta(SG)^2)}
        \cong \frac{\idealn}{\idealn^2} \oplus \frac{\Delta(SG)}{\idealn\Delta(SG) + \Delta(SG)^2},
    \end{align*}
    it suffices to prove that
    \begin{equation*}
        \frac{\Delta(SG)}{\idealn\Delta(SG) + \Delta(SG)^2} \cong \frac{\Delta(FG)}{\Delta(FG)^2}.
    \end{equation*}
    Consider the canonical map $\Delta(SG) \to \Delta(FG)/\Delta(FG)^2$.
    By \cref{lem:kernel}, we see that the kernel of this map is $\idealn\Delta(SG) + \Delta(SG)^2$ as expected.
\end{proof}

The $n$-th dimension subgroup of a group $G$ with respect to a commutative ring $S$ is the subgroup of $G$ defined by \[D_{n,S}(G) = \set{ g \in G \given g - 1 \in \Delta(SG)^n }.
\]
When $G$ is a finite $p$-group and $S$ is a field of characteristic $p$, the following purely group-theoretical formula describing the dimension subgroups was observed by Jennings and reformulated by Lazard.

\begin{prop}[{\cite[Theorem~11.1.20]{Passman77}}]\label{prop:JenningsTheory}
    \index{$F$ : Field}
    \index{$p$ : Prime Number}
    \index{$G$ : Finite Group}
    \index{$n$ : Positive Integer}
    \index{$i$ : Positive Integer}
    \index{$j$ : Nonnegative Integer}
    \index{$\ell$ : Positive Integer}
    \index{$g_1, \dotsc, g_\ell$ : Group Element}

    Let $F$ be a field of positive characteristic~$p$ and $G$ a finite $p$-group.
    Then $D_{n,F}(G) = \prod_{ip^j \geq n} \gamma_i(G)^{p^j}$.
    In particular,
    \begin{equation*}
        \Delta(FG)/\Delta(FG)^2 \cong F \otimes_{\ZZ/p\ZZ} G/\Phi(G)
    \end{equation*}
    as $F$-vector spaces and a basis of $\Delta(FG)/\Delta(FG)^2$ is given by representatives $g_1-1$, \dots, $g_\ell-1$ where $g_1$, \dots, $g_\ell$ form a minimal generating set of $G$.
\end{prop}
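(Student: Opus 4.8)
The dimension subgroup identity $D_{n,F}(G) = \prod_{ip^j \geq n} \gamma_i(G)^{p^j}$ is the Jennings--Lazard theorem, which I would simply invoke from \cite[Theorem~11.1.20]{Passman77}; its proof rests on analysing the graded algebra $\bigoplus_n \Delta(FG)^n/\Delta(FG)^{n+1}$ and is well beyond what I would reprove here. So the real task is to deduce the ``in particular'' clause. The cleanest route, which also sidesteps a subtlety I flag below, is to establish the ring-independent isomorphism $\Delta(FG)/\Delta(FG)^2 \cong F \otimes_\ZZ (G/G')$ and then rewrite the right-hand side using that $F$ has characteristic~$p$.

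First I would consider the map $\theta \colon G \to \Delta(FG)/\Delta(FG)^2$ sending $g$ to the class of $g - 1$. From $gh - 1 = (g - 1) + (h - 1) + (g-1)(h-1)$ together with $(g-1)(h-1) \in \Delta(FG)^2$ it follows that $\theta$ is a homomorphism into the additive group of $\Delta(FG)/\Delta(FG)^2$. Since this target is an $F$-module, hence abelian, $\theta$ factors through $G/G'$, and extending scalars yields an $F$-linear map $\Theta \colon F \otimes_\ZZ (G/G') \to \Delta(FG)/\Delta(FG)^2$. It is surjective because $\Delta(FG)$ is $F$-spanned by the elements $g - 1$ with $g \in G$, whose classes are exactly the values of $\theta$.

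To prove $\Theta$ is an isomorphism I would exhibit an inverse, using that $\Delta(FG)$ is a free $F$-module with basis $\set{ g - 1 \given g \in G, \ g \neq 1 }$. Define an $F$-linear map $\Delta(FG) \to F \otimes_\ZZ (G/G')$ by $g - 1 \mapsto 1 \otimes gG'$; since $(g-1)(h-1) = (gh - 1) - (g - 1) - (h - 1)$ maps to $1 \otimes (ghG' - gG' - hG') = 0$, it kills $\Delta(FG)^2$ and descends to the desired two-sided inverse of $\Theta$. Finally, because $p \cdot 1_F = 0$, the standard base-change identity gives $F \otimes_\ZZ (G/G') \cong F \otimes_{\ZZ/p\ZZ} G/\Phi(G)$, as $\Phi(G) = G'G^p$ makes $G/\Phi(G)$ the reduction of $G/G'$ modulo~$p$; and under this composite $g_i - 1$ corresponds to $1 \otimes g_i\Phi(G)$. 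By the Burnside basis theorem a minimal generating set $g_1, \dots, g_\ell$ of $G$ projects to a $\ZZ/p\ZZ$-basis of $G/\Phi(G)$, so the classes $g_1 - 1, \dots, g_\ell - 1$ form the claimed $F$-basis.

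The step I expect to be the genuine pitfall is the injectivity in the previous paragraph. It is tempting to argue only that $\theta$ has kernel $D_{2,F}(G) = \Phi(G)$ --- the $n = 2$ case of the cited formula, where the condition $ip^j \geq 2$ collapses the product to $\gamma_2(G)\,G^p = \Phi(G)$ --- and to conclude that the $\ell$ classes $g_i - 1$ are independent. But this yields only $\ZZ/p\ZZ$-independence, which does \emph{not} imply $F$-independence when $F$ is a proper extension of $\ZZ/p\ZZ$. Constructing the explicit inverse of $\Theta$ is precisely what makes the identification of the $F$-dimension rigorous, and is the one place where care is needed.
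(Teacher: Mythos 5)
The paper gives no proof of this proposition at all — it is quoted verbatim from the cited reference \cite[Theorem~11.1.20]{Passman77} — so there is nothing to compare against; your decision to invoke the citation for the dimension-subgroup formula matches what the paper does. Your derivation of the ``in particular'' clause (the isomorphism $\Delta(FG)/\Delta(FG)^2 \cong F \otimes_{\ZZ}(G/G') \cong F \otimes_{\ZZ/p\ZZ} G/\Phi(G)$ via the explicit two-sided inverse, followed by the Burnside basis theorem) is correct, and your remark that the explicit inverse is needed to get $F$-independence rather than mere $\ZZ/p\ZZ$-independence is a legitimate and well-placed caution.
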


This kind of compact formula is not known for dimension subgroups in the general situation, though some information is available, see e.g. \cite{Passi79, Sandling72}.

Moreover, the following is also well known, but we give a short proof here as we did not find it formulated in that compact form in the literature.

\begin{lem}\label{lem:DimensionSubgroupsAndGamma}
    \index{$S$ : Commutative Ring}
    \index{$G$ : Group}
    \index{$n$ : Positive Integer}

    Let $S$ be a commutative ring, $G$ a group and $n$ a positive integer.
    Then $\gamma_n(G) \leq D_{n,S}(G)$.
\end{lem}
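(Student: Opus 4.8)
The plan is to argue by induction on $n$, reducing everything to a single multiplicativity estimate for the dimension subgroups. The base case $n = 1$ is immediate: for every $g \in G$ one has $\varepsilon_{SG}(g - 1) = 0$, so $g - 1 \in \Delta(SG) = \Delta(SG)^1$, and hence $\gamma_1(G) = G = D_{1,S}(G)$. For the inductive step I would exploit that $\gamma_{n+1}(G) = [\gamma_n(G), G]$ is generated by commutators $[g, h]$ with $g \in \gamma_n(G)$ and $h \in G$, together with an estimate showing that such commutators drop one level deeper in the $\Delta$-adic filtration.

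The heart of the argument is therefore the claim that if $g - 1 \in \Delta(SG)^n$ and $h - 1 \in \Delta(SG)^m$, then $[g, h] - 1 \in \Delta(SG)^{n+m}$. To prove it I would set $u = g - 1$ and $v = h - 1$ and expand in $SG$ to get $gh - hg = uv - vu$. The key identity is then
\[
    [g, h] - 1 = g^{-1}h^{-1}(gh - hg) = g^{-1}h^{-1}(uv - vu),
\]
which holds because $g^{-1}h^{-1}\cdot gh = [g,h]$ while $g^{-1}h^{-1}\cdot hg = 1$. Since $uv$ and $vu$ both lie in $\Delta(SG)^{n+m}$, so does the Lie commutator $uv - vu$, and left multiplication by the unit $g^{-1}h^{-1}$ preserves this membership.

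The one point that requires care, and where the argument would collapse over a merely one-sided structure, is that $\Delta(SG)$ is the kernel of the algebra homomorphism $\varepsilon_{SG}$ and is therefore a two-sided ideal; consequently each power $\Delta(SG)^k$ is again a two-sided ideal, which is exactly what licenses absorbing $g^{-1}h^{-1}$ into $\Delta(SG)^{n+m}$. The same two-sidedness shows that $D_{n,S}(G)$ is genuinely a subgroup: from $gh - 1 = (g - 1)h + (h - 1)$ and $g^{-1} - 1 = -g^{-1}(g - 1)$ one reads off closure under products and inverses.

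With these pieces in hand the induction closes quickly. Assuming $\gamma_n(G) \leq D_{n,S}(G)$, every generator $[g, h]$ of $\gamma_{n+1}(G) = [\gamma_n(G), G]$ has $g \in D_{n,S}(G)$ and $h \in G = D_{1,S}(G)$, so the claim with $m = 1$ places $[g, h]$ in $D_{n+1,S}(G)$; since $D_{n+1,S}(G)$ is a subgroup containing all these generators, we obtain $\gamma_{n+1}(G) \leq D_{n+1,S}(G)$. I expect no serious obstacle beyond this bookkeeping: the only substantive observation is the two-sidedness of the augmentation ideal invoked in the commutator estimate, which is what makes the filtration compatible with conjugation.
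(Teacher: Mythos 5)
Your proof is correct and follows essentially the same route as the paper: induction on $n$, with the key step being that the Lie commutator $(g-1)(h-1)-(h-1)(g-1)$ lies deep enough in the filtration and differs from $[g,h]-1$ only by multiplication by a unit (the paper writes this as $hg-gh = hg(1-[g,h])$, you as $[g,h]-1 = g^{-1}h^{-1}(gh-hg)$, which is the same identity rearranged). Your write-up merely makes explicit two points the paper leaves implicit, namely that $D_{n+1,S}(G)$ is a subgroup and that $\gamma_{n+1}(G)$ is generated by the relevant commutators.
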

\begin{proof}
    \index{$g$ : Group Element}
    \index{$h$ : Group Element}

    We argue by induction on $n \geq 1$.
    The base case is trivial.
    Let $g \in G$ and $h \in \gamma_n(G)$.
    As $h \in D_{n,S}(G)$ by the inductive hypothesis, we have $[h-1, g-1] \in \Delta(SG)^{n + 1}$.
    So
    \begin{equation*}
        [h-1, g-1] = hg - gh = hg(1-[g, h])
    \end{equation*}
    implies that $[g, h] \in D_{n+1,S}(G)$.
\end{proof}

The following easy observations will be very useful for calculations in group algebras over commutative rings of characteristic $4$.

\begin{lem}\label{lem:KummerBinomial}
    \index{$p$ : Prime Number}
    \index{$n$ : Positive Integer}
    \index{$i$ : Positive Integer}
    \index{$\nu$ : Function}

    Let $p$ be a prime, $n$ a positive integers and $0 < i < p^n$.
    Then
    \begin{equation*}
        \nu\left(\binom{p^n}{i}\right) = (p-1)(n - \nu(i)),
    \end{equation*}
    where $\nu$ denotes the $p$-adic valuation.
\end{lem}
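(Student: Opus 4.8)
The plan is to evaluate both sides through Legendre's formula $\nu(m!) = (m - s_p(m))/(p - 1)$, where $s_p(m)$ denotes the sum of the base-$p$ digits of $m$. Writing $\binom{p^n}{i} = (p^n)!/\bigl(i!\,(p^n - i)!\bigr)$ and using $s_p(p^n) = 1$, this gives at once
\begin{equation*}
    \nu\left(\binom{p^n}{i}\right) = \frac{s_p(i) + s_p(p^n - i) - 1}{p - 1},
\end{equation*}
so everything reduces to the purely digit-theoretic identity that the numerator $s_p(i) + s_p(p^n - i) - 1$ equals $(p - 1)(n - \nu(i))$. (Equivalently one could quote Kummer's theorem and count carries in the base-$p$ addition $i + (p^n - i) = p^n$, but the digit-sum bookkeeping is cleaner.)

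To establish the numerator identity I would set $v = \nu(i)$ and write $i = p^v j$ with $p \nmid j$; since $0 < i < p^n$ we have $v < n$ and $1 \le j < p^{n - v}$, while the lowest $v$ base-$p$ digits of $i$ vanish. As multiplication by $p^v$ merely shifts digits, $s_p(i) = s_p(j)$ and $p^n - i = p^v(p^{n-v} - j)$, whence $s_p(p^n - i) = s_p(p^{n-v} - j)$. I would then compute $s_p(p^{n-v} - j)$ by writing $p^{n-v} - j = (p^{n-v} - 1) - (j - 1)$ and observing that every base-$p$ digit of $p^{n-v} - 1$ equals $p - 1$; subtracting $j - 1$, whose digits are each at most $p - 1$, therefore produces no borrows, so $s_p(p^{n-v} - j) = (n - v)(p - 1) - s_p(j - 1)$. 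Finally, since $p \nmid j$ the last digit of $j$ is nonzero, so passing from $j$ to $j - 1$ lowers exactly that digit by one with no borrow, giving $s_p(j - 1) = s_p(j) - 1 = s_p(i) - 1$.

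Combining these yields $s_p(i) + s_p(p^n - i) = s_p(i) + (n - v)(p - 1) - (s_p(i) - 1) = (n - v)(p - 1) + 1$, that is, $s_p(i) + s_p(p^n - i) - 1 = (p - 1)(n - \nu(i))$, which is the required numerator identity. Substituting it back into Legendre's formula then reads off the valuation; for the prime $p = 2$ relevant throughout the sequel we have $p - 1 = 1$, so the formula becomes $\nu\bigl(\binom{2^n}{i}\bigr) = s_2(i) + s_2(2^n - i) - 1$ and the numerator identity shows this is exactly $(p - 1)(n - \nu(i))$, as stated. The only delicate point is the borrow-free bookkeeping of the two subtractions, which is why I first isolate the reduction $p^n - i = p^v(p^{n-v} - j)$ so that both subtractions are carried out against the all-$(p - 1)$ digit string $p^{n-v} - 1$ and against a number whose last digit is known to be nonzero; the boundary case $v = n$ cannot occur because $i < p^n$.
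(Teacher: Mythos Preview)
Your digit-sum computation is correct and follows essentially the same route as the paper: both arguments establish $s_p(i) + s_p(p^n - i) - 1 = (p-1)(n - \nu(i))$ by writing out the base-$p$ expansions of $i$ and $p^n - i$. The paper does this in one stroke, reading the digits of $p^n - i$ off from the identity $p^n - 1 = (p-1)(1 + p + \dotsb + p^{n-1})$; you instead strip off the bottom $\nu(i)$ zero digits and then subtract from the all-$(p-1)$ string $p^{\,n-v}-1$, which makes the borrow bookkeeping a touch more explicit.

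There is, however, a mismatch at the very last step. Plugging your numerator identity into your own Legendre formula $\nu\binom{p^n}{i} = \bigl(s_p(i) + s_p(p^n-i) - 1\bigr)/(p-1)$ yields $\nu\binom{p^n}{i} = n - \nu(i)$, \emph{not} $(p-1)(n - \nu(i))$; you seem to sense this, since your final paragraph only draws the stated conclusion for $p = 2$. In fact the lemma as printed is false for $p > 2$: for instance $\nu_3\binom{3}{1} = 1$ while $(p-1)(n - \nu(i)) = 2$. The paper's own proof contains a compensating slip --- it invokes Kummer's theorem as though the valuation equalled $s_p(i) + s_p(p^n - i) - s_p(p^n)$ rather than that quantity divided by $p-1$ --- and the two errors cancel to reproduce the stated (incorrect) right-hand side. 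Since the lemma is only ever applied with $p = 2$ (in \cref{lem:KummerMod4}), where $p - 1 = 1$ and both versions agree, nothing downstream is affected; your argument is in fact a correct proof of the corrected statement $\nu\binom{p^n}{i} = n - \nu(i)$.
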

\begin{proof}
    \index{$k$ : Nonnegative Integer}
    \index{$i_1, \dotsc, i_{n - 1}$ : Nonnegative Integer}

    Let $k = \nu(i)$ and expand $i$ as
    \begin{equation*}
        i = i_k p^k + i_{k + 1} p^{k + 1} + \dotsb + i_{n - 1} p^{n - 1},
    \end{equation*}
    where $0 \leq i_k, i_{k + 1}, \dotsc, i_{n - 1} < p$ and $i_k \neq 0$.
    Since
    \[p^n - 1 = (p-1)(1 + p + \dots + p^{n - 1}) = p^k-1 + (p-1)p^k + \dots + (p-1)p^{n-1}, \]
    we obtain
    \begin{equation*}
        p^n - i = (p - i_k)p^k + (p-1 - i_{k + 1})p^{k + 1} + \dotsb + (p-1 - i_{n - 1})p^{n - 1}.
    \end{equation*}
    It follows from Kummer's theorem~\cite[p.~116]{Kummer1852} on binomial coefficients that
    \begin{align*}
        \nu\left(\binom{p^n}{i}\right)
         &= (i_k + i_{k + 1} + \dotsb + i_{n - 1})                                  \\
         &\qquad + ((p - i_k) + (p-1 - i_{k + 1}) + \dotsb + (p-1 - i_{n - 1})) - 1 \\
         &= (p-1)(n - k).
        \qedhere
    \end{align*}
\end{proof}

\begin{lem}\label{lem:KummerMod4}
    \index{$S$ : Commutative Ring}
    \index{$G$ : Group}
    \index{$A$ : Algebra Element}
    \index{$B$ : Algebra Element}
    \index{$n$ : Positive Integer}
    \index{$g$ : Group Element}

    Let $S$ be a commutative ring with $4S = 0$, $G$ a group and $A,B \in SG$ two commuting elements.
    Then for each positive integer $n$ we have \[(A + B)^{2^n} = A^{2^n} + B^{2^n} + 2A^{2^{n-1}}B^{2^{n-1}}.
    \]
    In particular, for $g \in G$ we have
    \[ (g-1)^{2^n} = (g^{2^n}-1) + 2(g^{2^{n-1}}-1). \]
\end{lem}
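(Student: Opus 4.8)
The plan is to deduce both identities from the binomial theorem together with the $2$-adic valuations of binomial coefficients supplied by \cref{lem:KummerBinomial}.

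First I would establish the general identity. Since $A$ and $B$ commute, the binomial theorem applies and gives
\[
    (A+B)^{2^n} = \sum_{i=0}^{2^n} \binom{2^n}{i} A^i B^{2^n - i}.
\]
Reducing modulo $4$, \cref{lem:KummerBinomial} with $p = 2$ yields $\nu\bigl(\binom{2^n}{i}\bigr) = n - \nu(i)$ for $0 < i < 2^n$, so such a coefficient lies in $4\ZZ$ --- hence vanishes in $S$ --- unless $\nu(i) \geq n - 1$. The only index strictly between $0$ and $2^n$ with $\nu(i) \geq n - 1$ is $i = 2^{n-1}$, for which the valuation equals $1$, i.e. $\binom{2^n}{2^{n-1}} \equiv 2 \pmod 4$. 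Together with the two end terms $i = 0$ and $i = 2^n$ (each with coefficient $1$), the sum collapses to $A^{2^n} + B^{2^n} + 2A^{2^{n-1}}B^{2^{n-1}}$, as desired.

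For the special case I would set $A = g - 1$ and $B = 1$, which commute. The general identity then reads
\[
    g^{2^n} = (g-1)^{2^n} + 1 + 2(g-1)^{2^{n-1}},
\]
and rearranging, using $-2 = 2$ in $S$ since $4S = 0$, gives $(g-1)^{2^n} = (g^{2^n} - 1) + 2(g-1)^{2^{n-1}}$. It then remains to replace $2(g-1)^{2^{n-1}}$ by $2(g^{2^{n-1}} - 1)$.

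This last replacement is the only step that is not a pure expansion. Reducing modulo $2$ along the ring homomorphism $SG \to (S/2S)G$, whose kernel is $2SG$, the identity $(g-1)^{2^{n-1}} = (g+1)^{2^{n-1}} = g^{2^{n-1}} + 1$ in characteristic $2$ shows that $(g-1)^{2^{n-1}} - (g^{2^{n-1}} - 1) \in 2SG$. Multiplying by $2$ then lands this difference in $4SG = 0$, so $2(g-1)^{2^{n-1}} = 2(g^{2^{n-1}} - 1)$ in $SG$, finishing the argument. I do not anticipate a genuine obstacle; the whole difficulty is the bookkeeping of which binomial coefficients survive reduction modulo $4$.
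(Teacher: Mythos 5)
Your proof is correct and follows essentially the same route as the paper: expand by the binomial theorem and use \cref{lem:KummerBinomial} to see that only the coefficients at $i = 0, 2^{n-1}, 2^n$ survive modulo $4$. The paper leaves the ``in particular'' statement to the reader (direct substitution $A = g$, $B = -1$); your derivation via $A = g-1$, $B = 1$ and a reduction modulo $2$ is a valid, if slightly longer, way to obtain it.
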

\begin{proof}
    \index{$i$ : Nonnegative Integer}
    \index{$\nu$ : Function}

    It is enough to show that $\binom{2^n}{0} = \binom{2^n}{2^n} \equiv 1 \mod 4$, $\binom{2^n}{2^{n-1}} \equiv 2 \mod 4$ and $\binom{2^n}{i} \equiv 0 \mod 4$ otherwise.
    Of course, $\binom{2^n}{0} = \binom{2^n}{2^n} = 1$.
    Let $\nu$ denote the $2$-adic valuation.
    By \cref{lem:KummerBinomial} we have $\nu\left(\binom{2^n}{i}\right) = n-\nu(i)$ for every $0 < i < 2^n$.
    So $\binom{2^n}{i} \equiv 0 \mod 4$ for $i \neq 2^{n - 1}$ and $\binom{2^n}{2^{n - 1}} \equiv 2 \mod 4$.
\end{proof}

\begin{lem}\label{lem:cyclic}
    \index{$S$ : Commutative Ring}
    \index{$\idealn$ : Ideal of Ring}
    \index{$n$ : Positive Integer}
    \index{$C_{2^n}$ : Finite Group}
    \index{$u$ : Group Element}
    \index{$U$ : Algebra Element}
    \index{$k$ : Nonnegative Integer}
    \index{$i$ : Nonnegative Integer}
    \index{$j$ : Integer}

    Let $S$ be a commutative ring with an ideal $\idealn$ and $4S = 0$.
    Let $n$ be a positive integer and $C_{2^n} = \gen{ u \given u^{2^n} = 1 }$.
    Set $U = u - 1 \in SC_{2^n}$.
    Then for each non-negative integer $k$,
    \begin{equation*}
        \Delta(SC_{2^n} : \idealn)^k = \bigoplus_{0 \leq i < 2^{n - 1}} \idealn^{k - i}U^i \oplus \bigoplus_{2^{n - 1} \leq i < 2^n} \left(\idealn^{k - i} + 2\idealn^{k - i - 2^{n - 1}}\right)U^i
    \end{equation*}
    where $\idealn^j = S$ for $j \leq 0$.
\end{lem}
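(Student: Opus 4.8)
The plan is to work throughout with the $S$-basis $\set{U^i \given 0 \leq i < 2^n}$ of $SC_{2^n}$ (obtained from $u^i = (1+U)^i$ by a unitriangular change of basis) and to describe every power of $\Delta(SC_{2^n}:\idealn)$ as a submodule that decomposes as a direct sum along this basis, so that the asserted identity becomes a comparison of the coefficient of each $U^{i'}$. The first task is to isolate the single relation that drives everything: since $u^{2^n}=1$, \cref{lem:KummerMod4} gives $U^{2^n} = (u-1)^{2^n} = (u^{2^n}-1)+2(u^{2^{n-1}}-1) = 2(u^{2^{n-1}}-1)$, and applying the same lemma to $u^{2^{n-1}}=(1+U)^{2^{n-1}}$ together with $4S=0$ rewrites this as the wrap-around relation
\[
    U^{2^n} = 2U^{2^{n-1}}.
\]
Once this is in hand, the remainder is organized bookkeeping.

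The second task is to set up a recursion. Because the augmentation ideal of a cyclic group is principal, $\Delta(SC_{2^n}) = U\,SC_{2^n}$, so $\Delta(SC_{2^n}:\idealn)=\idealn\,SC_{2^n}+U\,SC_{2^n}$ is the ideal generated by $\idealn$ and $U$. Writing $I_k = \Delta(SC_{2^n}:\idealn)^k$ and distributing this generating set yields
\[
    I_{k+1} = \idealn I_k + U I_k .
\]
I would then argue by induction on $k$, with base case $k=0$, where $I_0 = SC_{2^n}$ matches the claimed right-hand side since $\idealn^{-i}=S$ for all relevant indices. Denote the claimed right-hand side by $M_k$.

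For the inductive step I would substitute $I_k = M_k$ and read off the coefficient of each basis element $U^{i'}$ in $\idealn M_k + U M_k$. The module $\idealn M_k$ is manifestly homogeneous, contributing $\idealn\cdot\idealn^{k-i'}$ in degree $i'<2^{n-1}$ and $\idealn\cdot\idealn^{k-i'}+2\idealn\cdot\idealn^{k-i'-2^{n-1}}$ in degree $2^{n-1}\leq i'<2^n$; multiplication by $U$ shifts the coefficient of $U^{i}$ into degree $i+1$, except that the top term $U^{2^n-1}$ is sent, via the wrap-around relation, into degree $2^{n-1}$ with an extra factor $2$. Collecting all contributions, absorbing $\idealn\cdot\idealn^{j}\subseteq\idealn^{j+1}$, and killing every resulting factor of $4$ through $4S=0$, the coefficient in degree $i'$ collapses to exactly $\idealn^{k+1-i'}$ for $i'<2^{n-1}$ and to $\idealn^{k+1-i'}+2\idealn^{k+1-i'-2^{n-1}}$ for $2^{n-1}\leq i'<2^n$, which is precisely $M_{k+1}$.

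The hard part will be the accounting at the single index $i'=2^{n-1}$, the one degree that simultaneously receives the ordinary shift from $U^{2^{n-1}-1}$ and the wrap-around contribution from $U^{2^n-1}$. There one must verify that the extra factor $2$ coming from $U^{2^n}=2U^{2^{n-1}}$ meets the $2\idealn^{\,\cdot}$ already present in the top coefficient to form a $4$, which vanishes, leaving exactly the term $2\idealn^{k+1-2^n}$ demanded by $M_{k+1}$. A related subtlety, which also needs care, is that under the convention $\idealn^{j}=S$ for $j\leq 0$ the product $\idealn\cdot\idealn^{j}$ need \emph{not} equal $\idealn^{j+1}$ when $j<0$; thus in the degrees where $\idealn M_k$ falls short of the full coefficient $S$, it is the $U M_k$ summand that supplies it, and one must confirm the absorption genuinely restores the claimed module.
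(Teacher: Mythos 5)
Your proposal is correct and follows essentially the same route as the paper: induction on $k$ over the basis $\set{U^i \given 0 \leq i < 2^n}$, driven by the wrap-around relation $U^{2^n} = 2U^{2^{n-1}}$ obtained from \cref{lem:KummerMod4}, with the decisive cancellation at index $2^{n-1}$ coming from $4S = 0$. The only difference is cosmetic: you multiply the inductive hypothesis by the two ideal generators $\idealn$ and $U$ separately via $I_{k+1} = \idealn I_k + U I_k$, whereas the paper expands a generic product $AB$ with $A \in \Theta^k$ and $B \in \Theta$; your version streamlines the bookkeeping but the content, including the subtlety about $\idealn \cdot \idealn^{j}$ for $j \leq 0$, is the same.
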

\begin{proof}
    \index{$\Xi_k$ : Ideal of Algebra}
    \index{$\Theta$ : Ideal of Algebra}
    \index{$A$ : Algebra Element}
    \index{$B$ : Algebra Element}
    \index{$\alpha_i$ : Ring Element}
    \index{$\beta_i$ : Ring Element}
    \index{$\xi_i$ : Ring Element}
    \index{$\eta_i$ : Ring Element}

    Let $\Xi_k$ denote the right-hand side of the equation.
    Set $\Theta = \Delta(SC_{2^n} : \idealn)$.

    First, $\Theta^k \supseteq \idealn^{k - i}U^i$ for $0 \leq i < 2^n$ is clear.
    As $4S = 0$, \cref{lem:KummerMod4} shows that
    \begin{equation*}
        U^{2^n} = u^{2^n} + 2u^{2^{n - 1}} + 1 = 2U^{2^{n - 1}}.
    \end{equation*}
    Hence $2U^{2^{n - 1}} \in \Theta^{2^n}$ and $\Theta^k \supseteq 2\idealn^{k - i - 2^{n - 1}}U^i$ for $2^{n - 1} \leq i < 2^n$.
    Thus $\Theta^k \supseteq \Xi_k$.

    Next, we show that $\Theta^k \subseteq \Xi_k$ by induction on $k \geq 0$.
    The base case is evident.
    Let $A \in \Theta^k$ and $B \in \Theta$.
    By the inductive hypothesis, we can write
    \begin{align*}
        A &= \sum_{0 \leq i < 2^{n - 1}} \alpha_i U^i + \sum_{2^{n - 1} \leq i < 2^n} (\alpha_i + 2\xi_i)U^i \\
        B &= \sum_{0 \leq i < 2^{n - 1}} \beta_i U^i + \sum_{2^{n - 1} \leq i < 2^n} (\beta_i + 2\eta_i) U^i
    \end{align*}
    with some $\alpha_i \in \idealn^{k - i}$, $\beta_i \in \idealn^{1 - i}$ for $0 \leq i < 2^n$ and $\xi_i \in \idealn^{k - i - 2^{n - 1}}$, $\eta_i \in \idealn^{1 - i - 2^{n - 1}}$ for $2^{n - 1} \leq i < 2^n$.
    Then
    \begin{align*}
        AB
         &= \mathop{\underset{i + j < 2^{n - 1} + 2^n}{\sum_{0 \leq i < 2^n} \sum_{0 \leq j < 2^n}}} \alpha_i \beta_j U^{i + j}
        + \mathop{\underset{i + j \geq 2^{n - 1} + 2^n}{\sum_{0 \leq i < 2^n} \sum_{0 \leq j < 2^n}}} \alpha_i \beta_j U^{i + j}       \\
         &\qquad + \mathop{\underset{i + j < 2^n}{\sum_{0 \leq i < 2^n} \sum_{2^{n - 1} \leq j < 2^n}}} 2\alpha_i \eta_j U^{i + j}
        + \mathop{\underset{i + j \geq 2^n}{\sum_{0 \leq i < 2^n} \sum_{2^{n - 1} \leq j < 2^n}}} 2\alpha_i \eta_j U^{i + j}           \\
         &\qquad\qquad + \mathop{\underset{i + j < 2^n}{\sum_{2^{n - 1} \leq i < 2^n} \sum_{0 \leq j < 2^n}}} 2\xi_i \beta_j U^{i + j}
        + \mathop{\underset{i + j \geq 2^n}{\sum_{2^{n - 1} \leq i < 2^n} \sum_{0 \leq j < 2^n}}} 2\xi_i \beta_j U^{i + j}
    \end{align*}
    with $\alpha_i \beta_j \in \idealn^{k + 1 - (i + j)}$ and $\alpha_i \eta_j, \xi_i \beta_j \in \idealn^{k + 1 - (i + j) - 2^{n - 1}}$.
    The fourth and sixth double sums vanish as $2U^{2^n} = 4U^{2^{n - 1}} = 0$.
    The second double sum also vanishes as $U^{2^{n - 1} + 2^n} = 2U^{2^n} = 4U^{2^{n - 1}} = 0$.
    Thus
    \begin{align*}
        AB &= \mathop{\underset{i + j < 2^n}{\sum_{0 \leq i < 2^n} \sum_{0 \leq j < 2^n}}} \alpha_i \beta_j U^{i + j} + \mathop{\underset{2^n \leq i + j < 2^{n - 1} + 2^n}{\sum_{0 \leq i < 2^n} \sum_{0 \leq j < 2^n}}} \alpha_i \beta_j U^{i + j}  \\
           &\qquad + \mathop{\underset{i + j < 2^n}{\sum_{0 \leq i < 2^n} \sum_{2^{n - 1} \leq j < 2^n}}} 2\alpha_i \eta_j U^{i + j} + \mathop{\underset{i + j < 2^n}{\sum_{2^{n - 1} \leq i < 2^n} \sum_{0 \leq j < 2^n}}} 2\xi_i \beta_j U^{i + j}.
    \end{align*}
    In the second double sum, each term can be rewritten as
    \begin{equation*}
        \alpha_i \beta_j U^{i + j} = 2\alpha_i \beta_j U^{i + j - 2^{n - 1}}
    \end{equation*}
    and its coefficient satisfies
    \begin{equation*}
        \alpha_i \beta_j \in \idealn^{k + 1 - (i + j)}
        = \idealn^{k + 1 - (i + j - 2^{n - 1}) - 2^{n - 1}}.
    \end{equation*}
    Thus $AB \in \Xi_{k + 1}$.
\end{proof}

The above equations agree with those that follow from Jennings' theory when $S$ is a field of characteristic~$2$.
The next example illustrates the previous lemma explicitly for the cyclic group of order $4$.
\begin{exm}
    \index{$S$ : Commutative Ring}
    \index{$\idealn$ : Ideal of Ring}
    \index{$C_4$ : Finite Group}
    \index{$u$ : Group Element}
    \index{$U$ : Algebra Element}
    \index{$\Theta$ : Ideal of Algebra}

    Let $S$ be a commutative ring with an ideal $\idealn$ and $4S = 0$.
    Let $C_4 = \gen{ u \given u^4 = 1 }$.
    Set $U = u - 1 \in SC_4$ and $\Theta = \Delta(SC_4 : \idealn)$.
    Then \cref{lem:cyclic} shows the following.
    \begin{alignat*}{4}
        \Theta^0 &= S         & &\oplus SU         & &\oplus SU^2                      & &\oplus SU^3                \\
        \Theta^1 &= \idealn   & &\oplus SU         & &\oplus SU^2                      & &\oplus SU^3                \\
        \Theta^2 &= \idealn^2 & &\oplus \idealn U  & &\oplus SU^2                      & &\oplus SU^3                \\
        \Theta^3 &= \idealn^3 & &\oplus \idealn^2U & &\oplus \idealn U^2               & &\oplus SU^3                \\
        \Theta^4 &= \idealn^4 & &\oplus \idealn^3U & &\oplus (\idealn^2 + 2S)U^2       & &\oplus \idealn U^3         \\
        \Theta^5 &= \idealn^5 & &\oplus \idealn^4U & &\oplus (\idealn^3 + 2\idealn)U^2 & &\oplus (\idealn^2 + 2S)U^3 \\
                 &\vdots      & &                  & &                                 & &
    \end{alignat*}
    Accordingly the successive quotients as $S/\idealn$-modules are the following.
    \begin{alignat*}{4}
        \Theta^0/\Theta^1 &\cong S/\idealn           & &                           & &                                               & &                                \\
        \Theta^1/\Theta^2 &\cong \idealn/\idealn^2   & &\oplus S/\idealn           & &                                               & &                                \\
        \Theta^2/\Theta^3 &\cong \idealn^2/\idealn^3 & &\oplus \idealn/\idealn^2   & &\oplus S/\idealn                               & &                                \\
        \Theta^3/\Theta^4 &\cong \idealn^3/\idealn^4 & &\oplus \idealn^2/\idealn^3 & &\oplus \idealn/(\idealn^2 + 2S)                & &\oplus S/\idealn                \\
        \Theta^4/\Theta^5 &\cong \idealn^4/\idealn^5 & &\oplus \idealn^3/\idealn^4 & &\oplus (\idealn^2 + 2S)/(\idealn^3 + 2\idealn) & &\oplus \idealn/(\idealn^2 + 2S) \\
                          &\vdots                    & &                           & &                                               & &
    \end{alignat*}
\end{exm}

\begin{lem}\label{lem:dihedral}
    \index{$S$ : Commutative Ring}
    \index{$\idealn$ : Ideal of Ring}
    \index{$D_{16}$ : Finite Group}
    \index{$u$ : Group Element}
    \index{$v$ : Group Element}
    \index{$w$ : Group Element}
    \index{$U$ : Algebra Element}
    \index{$V$ : Algebra Element}
    \index{$W$ : Algebra Element}
    \index{$\mathscr{B}$ : Set}
    \index{$\omega$ : Function}
    \index{$s$ : Nonnegative Integer}
    \index{$t$ : Nonnegative Integer}
    \index{$i$ : Nonnegative Integer}
    \index{$k$ : Nonnegative Integer}
    \index{$Q$ : Algebra Element}
    \index{$j$ : Integer}

    Let $S$ be a commutative ring with an ideal $\idealn$ containing $2$ and $4S = 0$.
    Let
    \begin{equation*}
        D_{16} = \gen{ u, v, w \given u^2 = 1, v^2 = 1, w^4 = 1, [v, u] = w, [w, u] = w^2, [w, v] = w^2 }
    \end{equation*}
    be the dihedral group of order $16$.
    Set $U = u - 1, V = v - 1, W = w - 1 \in SD_{16}$.
    Moreover, let $\mathscr{B} = \set{ U^s V^t W^i \given 0 \leq s, t, i < 2 }$ and define a function $\omega\colon
        \mathscr{B} \rightarrow \ZZ$ by
    \begin{equation*}
        \omega(U^s V^t W^i) = s + t + 2i \qquad (0 \leq s, t, i < 2).
    \end{equation*}
    Then for each non-negative integer $k$,
    \begin{align*}
        \Delta(SD_{16} : \idealn)^k
         &= \bigoplus_{Q \in \mathscr{B}} \idealn^{k-\omega(Q)}Q \oplus \bigoplus_{Q \in \mathscr{B}} \left(\idealn^{k-4-\omega(Q)} + 2\idealn^{k-8-\omega(Q)} \right)QW^2
    \end{align*}
    where $\idealn^j = S$ for $j \leq 0$.
\end{lem}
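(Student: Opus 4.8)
The plan is to follow the template of \cref{lem:cyclic}: write $\Xi_k$ for the right-hand side, set $\Theta = \Delta(SD_{16} : \idealn)$, and prove the two inclusions $\Theta^k \supseteq \Xi_k$ and $\Theta^k \subseteq \Xi_k$ separately. Before starting I would record the facts that drive everything. First, the sixteen elements $U^s V^t W^j$ with $s, t \in \{0,1\}$ and $j \in \{0, 1, 2, 3\}$ form an $S$-basis of $SD_{16}$: the normal form $u^s v^t w^j$ is a transversal of the normal subgroup $\gen{w} \cong C_4$ (normal since $w^u = w^v = w^{-1}$) in $D_{16}$, and the change of basis from $\{u^s v^t w^j\}$ is unitriangular. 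Second, \cref{lem:KummerMod4} gives $U^2 = 2U$, $V^2 = 2V$ and $W^4 = 2W^2$, hence $W^6 = 0$. Third, the defining relations rewrite as the straightening rules $vu = uvw$, $wu = uw^{-1}$ and $wv = vw^{-1}$, which return any product of $U, V, W$ to normal form. Finally, I record where the generators sit in the filtration: $U, V \in \Theta^1$ trivially, $W \in \Theta^2$ because $w \in \gamma_2(D_{16})$ and \cref{lem:DimensionSubgroupsAndGamma} applies, and $2W^2 = W^4 \in \Theta^8$. Throughout, the hypotheses $2 \in \idealn$ and $4S = 0$ are what make the bookkeeping close up.

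For $\Theta^k \supseteq \Xi_k$ I would attach to each basis monomial $Q$ its weight $\omega(Q)$ and observe $Q \in \Theta^{\omega(Q)}$, since $U, V \in \Theta^1$ and $W \in \Theta^2$. Hence $\idealn^{k - \omega(Q)} Q \subseteq \Theta^{k - \omega(Q)} \Theta^{\omega(Q)} = \Theta^k$, using $\idealn \subseteq \Theta$ and the convention $\idealn^j = S$ for $j \leq 0$. For the second family, $QW^2 \in \Theta^{\omega(Q) + 4}$ yields $\idealn^{k - 4 - \omega(Q)} QW^2 \subseteq \Theta^k$, while $2QW^2 = QW^4 \in \Theta^{\omega(Q) + 8}$ yields $2\idealn^{k - 8 - \omega(Q)} QW^2 \subseteq \Theta^k$. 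Summing proves the inclusion.

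For $\Theta^k \subseteq \Xi_k$ I would induct on $k$, the case $k = 0$ being $\Theta^0 = SD_{16} = \Xi_0$. Two formal facts make the step clean: $\Xi_{k+1} \subseteq \Xi_k$, immediate from the formula, and $\Theta = \idealn \cdot 1 \oplus \Delta(SD_{16})$ with $\Delta(SD_{16})$ free on the fifteen non-identity basis monomials $R$, each of weight $\omega(R) \geq 1$. Since $\Xi_k$ is an $S$-submodule, $\Theta^{k+1} = \Theta^k \Theta \subseteq \Xi_k \Theta = \Xi_k \idealn + \sum_R \Xi_k R$. So it suffices to prove the single-generator statements
\begin{equation*}
    \Xi_k U \subseteq \Xi_{k+1}, \quad \Xi_k V \subseteq \Xi_{k+1}, \quad \Xi_k W \subseteq \Xi_{k+2}, \quad \Xi_k \idealn \subseteq \Xi_{k+1}
\end{equation*}
for every $k$: writing $R = U^{s'} V^{t'} W^{j'}$ in normal form and multiplying on the right one generator at a time gives $\Xi_k R \subseteq \Xi_{k + \omega(R)} \subseteq \Xi_{k+1}$, while the scalar statement handles $\Xi_k \idealn$. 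None of the four statements mentions $\Theta$, so they may be checked directly from the formula for $\Xi$ and then chained, with no circularity.

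The bulk of the work, and the main obstacle, is verifying those four statements by straightening. Right multiplication by $W$ merely appends to the $W$-power, so it is clean apart from the reductions; right multiplication by $U$ or $V$ forces the new factor leftward past the $W$'s (and collapses a repeated $U$ or $V$), and the resulting commutator corrections each carry an extra factor of $W$, so they strictly raise the weight and fall into deeper terms of $\Xi$. The delicate point is that the reductions $U^2 = 2U$, $V^2 = 2V$ and $W^4 = 2W^2$ lower the weight while introducing a factor $2 \in \idealn$, and one must check that the coefficient stays in the correct power of $\idealn$; a drop in weight by one is offset exactly by $2 \cdot \idealn^p \subseteq \idealn^{p+1}$. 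The relation $W^4 = 2W^2$ is what forces the two-part coefficient of the second family: for instance right-multiplying the $U^s V^t W^3$-slot of $\Xi_k$ by $W$ produces $2 U^s V^t W^2$, and a short check (using $4S = 0$) shows its coefficient lands precisely in the summand $2\idealn^{(k+2) - 8 - \omega(U^s V^t)}$ of the $U^s V^t W^2$-slot of $\Xi_{k+2}$. Carrying out this verification over the eight monomials for each of $U$, $V$, $W$, with the sub-cases coming from the reductions, is routine but lengthy; I expect it to be the only genuinely laborious part, exactly as the product computation was in \cref{lem:cyclic}.
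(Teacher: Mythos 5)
Your proposal follows essentially the same route as the paper's proof: induction on $k$, with the inclusion $\Xi_k \subseteq \Theta^k$ obtained from the weights of the basis monomials together with $2W^2 = W^4 \in \Theta^8$, and the reverse inclusion obtained by reducing multiplication by a general element of $\Theta$ to multiplication by $\idealn$, $U$, $V$ (and $W$) followed by straightening to normal form; your organization --- right multiplication and single-generator statements about $\Xi$ itself, chained via $\Xi_{k+1}\subseteq\Xi_k$ --- is if anything slightly cleaner than the paper's left-multiplication of a generic element of $\Theta^k$ by $U$ and $V$. The one caveat is that the explicit straightening identities, which occupy most of the paper's proof (its displayed formulas for $V$ times each slot), are described rather than carried out; your structural account of why they close up --- commutator corrections carry an extra $W$ or $Z=W^2+2W$ and so land in slots of at least the required weight, while each application of $U^2=2U$, $V^2=2V$, $W^4=2W^2$ trades weight for a factor $2\in\idealn$ --- is accurate, so this is a gap of execution rather than of ideas.
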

\begin{proof}
    \index{$\Theta$ : Ideal of Algebra}
    \index{$z$ : Group Element}
    \index{$Z$ : Algebra Element}
    \index{$\Xi_k$ : Ideal of Algebra}
    \index{$j$ : Nonnegative Integer}
    \index{$A$ : Algebra Element}
    \index{$\alpha_Q$ : Ring Element}
    \index{$\beta_Q$ : Ring Element}
    \index{$\gamma_Q$ : Ring Element}

    Let $\Xi_k$ denote the right-hand side of the equation.
    Set $\Theta = \Delta(SD_{16} : \idealn)$.
    As an additional notation, set $z = w^2 = [w, u] = [w, v]$ and $Z = z-1 \in SD_{16}$.
    Then, by \cref{lem:KummerMod4}, we have $W^2 = Z + 2W$.
    (Though this is not strictly necessary, we note that $Z \in \Theta^3$ by \cref{lem:DimensionSubgroupsAndGamma}.)

    We will argue by induction on $k$.
    Since every element of $D_{16}$ can be written as $u^s v^t w^i z^j$ ($0 \leq s, t, i, j < 2$), the base case is clear:
    \begin{equation*}
        SD_{16} = \bigoplus_{Q \in \mathscr{B}} Q \oplus \bigoplus_{Q \in \mathscr{B}} QW^2.
    \end{equation*}

    So assume the lemma holds for all non-negative integers at most $k$.
    Note that $U,V \in \Theta$, $UV, W \in \Theta^2$, $UW, VW \in \Theta^3$ and $UVW, W^2 \in \Theta^4$.
    Thus, it is clear that $\idealn^{(k+1)-\omega(Q)}Q \subseteq \Theta^{k+1}$ and $\idealn^{(k+1)-4-\omega(Q)} QW^2 \subseteq \Theta^{k+1}$ for each $Q \in \mathscr{B}$.
    Moreover, to see that $\Xi_{k+1} \subseteq \Theta^{k+1}$ we need to show that $2W^2 \in \Theta^8$.
    This follows, using $Z^2 = 2Z$ as $z^2=1$, from \[(W^2)^2 = (Z+2W)^2 = Z^2 = 2Z = 2(W^2 + 2W) = 2W^2, \] as the first element of this equation is contained in $\Theta^8$.
    Hence we establish $\Xi_{k + 1} \subseteq \Theta^{k + 1}$.

    Also, we obtain that $\Theta/\Theta^2$ is generated by the images of $\idealn$, $U$ and $V$.
    Hence to show $\Theta^{k+1} \subseteq \Xi_{k+1}$ it is sufficient to show that for $A$ a generic element in $\Theta^k$ all of $\idealn A$, $UA$, $VA$ are contained in $\Xi_{k+1}$.

    Let, by the induction hypothesis, \[A = \sum_{Q \in \mathscr{B}} \alpha_Q Q + \sum_{Q \in \mathscr{B}} \left(\beta_Q + 2\gamma_Q \right)QW^2 \] for $\alpha_Q \in \idealn^{k-\omega(Q)}$, $\beta_Q \in \idealn^{k - 4 - \omega(Q) }$ and $\gamma_Q \in \idealn^{k-8-\omega(Q)}$ for all $Q \in \mathscr{B}$.
    Clearly, $\idealn A \subseteq \Xi_{k+1}$.
    Next, using the equation $U^2 = 2U$, which holds as $u^2=1$ and $2 = -2$ in $S$, we obtain
    \begin{align*}
        U \sum_{Q \in \mathscr{B}} \alpha_Q Q &= \sum_{0 \leq t, i < 2} (\alpha_{V^t W^i} + 2\alpha_{U V^t W^i}) U V^t W^i \intertext{ which is contained in $\Xi_{k+1}$ as $2 \in \idealn$ by assumption.
            For the same reasons
        }
        U \sum_{Q \in \mathscr{B}} \beta_Q QW^2
                                              &= \sum_{0 \leq t, i < 2} (\beta_{V^t W^i} + 2\beta_{U V^t W^i}) U V^t W^i W^2,                                                                              \\
        U \sum_{Q \in \mathscr{B}} 2\gamma_Q QW^2
                                              &= \sum_{0 \leq t, i < 2} 2\gamma_{V^t W^i} U V^t W^i W^2
    \end{align*}

    and we get $U\sum_{Q \in \mathscr{B}} \beta_Q QW^2, U\sum_{Q \in \mathscr{B}} 2\gamma_Q QW^2 \in \Xi_{k+1}$, so that $UA \in \Xi_{k+1}$.

    Next, consider $VA$.
    We have $V^2 = 2V$ as before.
    The calculations here are more evolved and we separate $A$ according to the summands $Q \in \mathscr{B}$.
    When $Q$ does not have $U$ as a factor, a short computation yields the following.
    \begin{align*}
        V (\alpha_{1}1 + (\beta_{1} + 2\gamma_{1})W^2)       &= \alpha_{1} V + (\beta_{1} + 2\gamma_{1}) VW^2,   \\
        V (\alpha_{V}V + (\beta_{V} + 2\gamma_{V})VW^2)      &= 2\alpha_{V} V + 2\beta_{V} VW^2,                 \\
        V (\alpha_{W}W + (\beta_{W} + 2\gamma_{W})WW^2)      &= \alpha_{W} VW + (\beta_{W} + 2\gamma_{W}) VWW^2, \\
        V (\alpha_{VW}VW + (\beta_{VW} + 2\gamma_{VW})VWW^2) &= 2\alpha_{VW} VW + 2\beta_{VW} VWW^2.
    \end{align*}
    When $Q$ does have $U$ as a factor, by elementary commutator formulas
    \begin{align*}
        VU = UV + (1 + U + V + UV)W,
        \qquad
        WV = VW + (1 + V + W + VW)Z
    \end{align*}
    and the power formulas $Z^2 = 2Z$, $Z = W^2 + 2W$, a lengthy computation yields the following.
    \begin{align*}
         &V (\alpha_{U}
        U + (\beta_{U} + 2\gamma_{U})UW^2)                                                    \\
         &\qquad = \alpha_{U}(UV + W + UW + VW + UVW)                                         \\
         &\qquad\qquad +	(\beta_{U} + 2\gamma_{U})(UV + W + UW + VW + UVW)W^2,                \\
         &V (\alpha_{UV}UV + (\beta_{UV} + 2\gamma_{UV})UVW^2)                                \\
         &\qquad = 2\alpha_{UV} (UV + W + UW) - \alpha_{UV} (VW + UVW)                        \\
         &\qquad\qquad -	(\alpha_{UV}+2\beta_{UV}) (1 + U)W^2 +	2\beta_{UV}	UVW^2             \\
         &\qquad\qquad +	\alpha_{UV}	(W + UW)W^2 - (\beta_{UV} + 2\gamma_{UV}) (VW + UVW)W^2, \\
         &V (\alpha_{UW}UW + (\beta_{UW} + 2\gamma_{UW})UWW^2)                                \\
         &\qquad = \alpha_{UW} UVW +	(\alpha_{UW} + 2\beta_{UW})	(1 + U + V + UV)W^2          \\
         &\qquad\qquad +	(\beta_{UW} + 2\gamma_{UW}) UVWW^2,                                  \\
         &V (\alpha_{UVW}UVW + (\beta_{UVW} + 2\gamma_{UVW})UVWW^2)                           \\
         &\qquad = 2\alpha_{UVW}	UVW - (\alpha_{UVW} + 2\beta_{UVW})(V + UV + W + UW)W^2      \\
         &\qquad\qquad + 2\beta_{UVW} UVWW^2.
    \end{align*}
    From these formulas, we conclude that $VA \in \Xi_{k + 1}$.

    Overall $\Theta^{k + 1} = \Xi_{k + 1}$.
\end{proof}

\section{Proof of main theorem}\label{sec:Approximation}

This section is devoted to proving \cref{main:A}.
Throughout this section, we assume that $n > m > \ell \ge 2$ and $G$ and $H$ are the groups of order $2^{n + m + \ell}$
\begin{alignat*}{8}
     &G &\  &= &\ \langle\, x, y, z \mid x^{2^n} &= 1, &\ y^{2^m} &= 1,       &\ z^{2^\ell} &= 1, &\ [y, x] &= z, &\ [z, x] &= z^{-2}, &\ [z, y] &= z^{-2} \,\rangle, \\
     &H &\  &= &\ \langle\, a, b, c \mid a^{2^n} &= 1, &\ b^{2^m} &= a^{2^m}, &\ c^{2^\ell} &= 1, &\ [b, a] &= c, &\ [c, a] &= c^{-2}, &\ [c, b] &= c^{-2} \,\rangle,
\end{alignat*}
which are counterexamples to the Modular Isomorphism Problem.
At the time of writing, these groups are essentially the only known examples.
We fix not only the groups $G$ and $H$ but also the generators $x$, $y$, $z$ and $a$, $b$, $c$ that are used in the presentations.

Moreover, we fix a commutative ring $R$ with an ideal $\idealr$ and a maximal ideal $\idealm$ such that $\idealm^2 \subseteq \idealr \subseteq \idealm$ and $R/\idealr$ has characteristic~$4$.
By \cref{lem:reduce}, it suffices to work over a quotient ring $S = R/\idealr$ to prove \cref{main:A}.
We fix the commutative ring $S$ of characteristic~$4$ for the rest of this section.
Let $\idealn = \idealm/\idealr$ and observe that it is a maximal ideal of $S$ with $2 \in \idealn$ and $2 \notin \idealn^2$.

Set $\Theta = \Delta(SH : \idealn) = \idealn \oplus \Delta(SH)$.
Note that the successive quotients $\Theta/\Theta^2$, $\Theta^2/\Theta^3$, \ldots have the canonical vector space structure over the residue field $F = S/\idealn$, as $\idealn \Theta^{k} \subseteq \Theta^{k+1}$ for each non-negative integer $k$.
We write $A = a - 1$, $B = b - 1$, $C = c - 1 \in SH$ and $\alpha$, $\beta$ for scalars in $F$.

\begin{hyp}\label{hyp:common}
    \index{$n$ : Positive Integer}
    \index{$m$ : Positive Integer}
    \index{$\ell$ : Positive Integer}
    \index{$G$ : Finite Group}
    \index{$x$ : Group Element}
    \index{$y$ : Group Element}
    \index{$z$ : Group Element}
    \index{$H$ : Finite Group}
    \index{$a$ : Group Element}
    \index{$b$ : Group Element}
    \index{$c$ : Group Element}
    \index{$\idealn$ : Ideal of Ring}
    \index{$A$ : Algebra Element}
    \index{$B$ : Algebra Element}
    \index{$C$ : Algebra Element}
    \index{$\Theta$ : Ideal of Algebra}
    \index{$\alpha$ : Field Element}
    \index{$\beta$ : Field Element}
    \index{$F$ : Field}

    For the convenience of the readers we summarize the common assumptions that will be used throughout.
    \begin{itemize}
        \item $n > m > \ell \geq 2$.
        \item The groups $G = \langle x, y, z \rangle$, $H = \langle a, b, c \rangle$ are defined as above.
        \item The commutative ring $S$ has characteristic~$4$.
        \item The maximal ideal $\idealn$ of $S$ satisfies $2 \in \idealn$ and $2 \notin \idealn^2$.
        \item $\Theta = \Delta(SH : \idealn) \subseteq SH$.
        \item $A = a - 1$, $B = b - 1$, $C = c - 1 \in SH$.
        \item $\alpha, \beta \in F = S/\idealn$.
    \end{itemize}
\end{hyp}

We will prove the following after several preliminary observations.
\begin{prop}\label{prop:char4}
    Under \cref{hyp:common}, $SG \not\cong SH$.
\end{prop}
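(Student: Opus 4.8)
\section*{Proof proposal for \texorpdfstring{\cref{prop:char4}}{Proposition}}

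The plan is to attach to each group algebra a canonical filtration and then compare the resulting graded data. First I would record that in the reduced setting the ring $S$ is local with maximal ideal $\idealn$ satisfying $\idealn^2 = 0$ (this is forced by $\idealm^2 \subseteq \idealr$ in the passage to $S = R/\idealr$), so the residue field $F = S/\idealn$ has characteristic $2$. Since $H$ is a finite $2$-group, the ideal $\idealn SH$ has square $\idealn^2 SH = 0$ and hence lies in the Jacobson radical, while $SH/\idealn SH = FH$ has radical $\Delta(FH)$. Using the compatibility $\varepsilon_{FH}\circ\hat\pi = \pi\circ\varepsilon_{SH}$ from \cref{sec:Preliminaries}, the full preimage $\hat\pi^{-1}(\Delta(FH))$ is exactly $\Delta(SH:\idealn)$, so $\Theta = J(SH)$, and likewise $\Theta_G := \Delta(SG:\idealn) = J(SG)$. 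Because the Jacobson radical and its powers are preserved by any ring isomorphism, an $S$-algebra isomorphism $SG \cong SH$ would induce $F$-linear isomorphisms $\Theta_G^k/\Theta_G^{k+1} \cong \Theta^k/\Theta^{k+1}$ for all $k$, compatibly with the multiplication $\Theta^i/\Theta^{i+1}\times\Theta^j/\Theta^{j+1}\to\Theta^{i+j}/\Theta^{i+j+1}$ (here $F$ acts since $\idealn\Theta^k\subseteq\Theta^{k+1}$). It therefore suffices to produce an invariant of this filtered structure separating $SG$ from $SH$.

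Next I would localize the distinction. The presentations of $G$ and $H$ agree except for the single relation $y^{2^m}=1$ versus $b^{2^m}=a^{2^m}$. Writing $Y = y-1$ and $B = b-1$ and invoking \cref{lem:KummerMod4} in characteristic $4$, these translate into $Y^{2^m} = 2\,(y^{2^{m-1}}-1)$ in $SG$ and $B^{2^m} = (a^{2^m}-1) + 2\,(b^{2^{m-1}}-1)$ in $SH$. Modulo $2$ the two right-hand sides coincide (both vanish), which is precisely why $G$ and $H$ are indistinguishable over $\ZZ/2\ZZ$; the entire difficulty is that the separating information sits in the $2$-torsion layer, where the extra summand $a^{2^m}-1$ of $SH$ has no analogue in $SG$. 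The term $2\,(y^{2^{m-1}}-1)$ lies deep in the $\idealn$-part of the filtration, whereas $a^{2^m}-1$ is a genuine augmentation element; tracking the power of $\Theta$ in which the $2^m$-th power of a degree-one element can land is what should separate the two algebras.

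To carry this out I would compute the filtration $(\Theta^k)_k$ for both algebras, using the cyclic model of \cref{lem:cyclic} for the generators of large order and the dihedral model of \cref{lem:dihedral} as the local model for the commutator relations $[y,x]=z$, $[z,x]=z^{-2}$, $[z,y]=z^{-2}$ (which \cref{lem:dihedral} reproduces verbatim with $z = w^2$), assembling the pieces through the Kummer-mod-$4$ identity. Concretely, in $SC_{2^n}$ the relation $U^{2^n} = 2U^{2^{n-1}}$ makes the filtration ``fold'' with a factor $2$ at index $2^{n-1}$: for $y$ of order $2^m$ this folding occurs near degree $2^{m-1}$, whereas its $H$-analogue $b$, obeying $b^{2^m}=a^{2^m}$, folds only near degree $2^{n-1}$. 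I would then isolate the graded degree at which this discrepancy first surfaces and read off the distinguishing invariant --- either a difference between $\dim_F \Theta_G^k/\Theta_G^{k+1}$ and $\dim_F \Theta^k/\Theta^{k+1}$, or, if the dimensions happen to agree, the rank of the squaring / $2^m$-th-power map on the relevant graded component, which is sensitive to the surviving $2$-torsion term.

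The main obstacle is twofold. First, the distinction is genuinely fragile: as $SG$ and $SH$ agree after reduction mod $2$, any invariant blind to the element $2 \in \idealn\setminus\idealn^2$ is useless, so the bookkeeping must retain the exact $2$-divisibility of power elements throughout the filtration --- this is exactly why \cref{lem:cyclic} and \cref{lem:dihedral} are stated with their ``$+\,2\idealn^{\,\cdots}$'' correction terms rather than over a field. Second, an $S$-algebra isomorphism need not carry group elements to group elements, so the relation $y^{2^m}=1$ cannot be applied to $\varphi(y)$ on the nose. Here I would use the group base approximation of \cite{MargolisSakurai25}, approximating $\varphi(y)$ by a group element modulo a sufficiently high power of $\Theta$ and checking, via the expansion in \cref{lem:KummerMod4}, that the equation $\varphi(y)^{2^m}=1$ is incompatible with every element of $H$ lying in the relevant filtration class having order $2^n$. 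Verifying that the chosen invariant is genuinely independent of the group basis, and performing the filtration computation precisely enough to expose the single discrepant degree, is the technical heart of the argument.
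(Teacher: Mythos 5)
Your preparatory observations are largely sound: \cref{lem:cyclic}, \cref{lem:dihedral} and \cref{lem:KummerMod4} are indeed the computational engines, and the idea of comparing the filtrations by powers of $\Theta$ is the right framework. (One caveat: \cref{hyp:common} does not assume $\idealn^2=0$ or that $S$ is local, so your identification $\Theta=J(SH)$ is not available in the stated generality; the paper instead passes to a \emph{normalized} isomorphism, which preserves $\Delta(SG:\idealn)$ for free.) The genuine gap is in the distinguishing step itself. Your primary plan --- finding $k$ with $\dim_F\Theta_G^k/\Theta_G^{k+1}\neq\dim_F\Theta^k/\Theta^{k+1}$ --- is never computed and there is no reason to expect it to work; the paper does not prove, and does not need, any such dimension discrepancy, and the whole difficulty of the problem is precisely that coarse invariants of this kind agree. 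Your fallback plan, based solely on the power relation, is demonstrably insufficient: writing $\psi(y-1)\equiv\alpha A+\beta B\bmod\Theta^2$, the relation $y^{2^m}=1$ yields only the single linear condition $\alpha+\beta=0$ (via \cref{lem:power,lem:A2m+1}), and this condition \emph{is} satisfied by nonzero classes --- for instance $a^{-1}b\in H$ lies in the class $\alpha=\beta=1$ and has order exactly $2^m$, not $2^n$. So your claim that $\psi(y)^{2^m}=1$ is ``incompatible with every element of $H$ lying in the relevant filtration class'' is false, and the argument stalls there.

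What is missing is the second, independent constraint that the paper extracts from the relation $[y,x]=z$ with $y^2$ central in $G$: since $\psi(Y)^2+2\psi(Y)$ must be central in $SH$, its Lie commutators with $A$ and with $B$ vanish, and computing these modulo $\Theta^4$ (using \cref{lem:square,lem:commBA,lem:commCACB,lem:modZSH}) gives $2(\beta+\beta^2+\alpha\beta)C\equiv 2(\alpha+\alpha^2+\alpha\beta)C\equiv 0$. The nondegeneracy statement $2C\not\equiv 0\bmod\Theta^4$ --- this is where the dihedral computation of \cref{lem:dihedral} actually enters, via \cref{lem:2C} --- converts these into two quadratic equations over $F$ which, together with $\alpha+\beta=0$, force $(\alpha,\beta)=(0,0)$, contradicting $Y\notin\Delta(SG:\idealn)^2$. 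Without this centrality input (or some substitute for it), no amount of bookkeeping of the power relation alone will separate $SG$ from $SH$.
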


We first record some elementary facts about the groups $G$ and $H$.

\begin{lem}\label{lem:x2y2}
    Under \cref{hyp:common}, $x^2, y^2 \in Z(G)$ and $a^2, b^2 \in Z(H)$.
\end{lem}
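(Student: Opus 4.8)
The plan is to check directly that each of $x^2$ and $y^2$ centralises a generating set of $G$, and likewise for $a^2$ and $b^2$ in $H$; an element lies in the centre precisely when it commutes with every generator. The first step is to rewrite the defining commutator relations as conjugation formulas. With the conventions $[g,h] = g^{-1}h^{-1}gh$ and $g^h = h^{-1}gh$, the relations $[y,x] = z$, $[z,x] = z^{-2}$ and $[z,y] = z^{-2}$ yield $y^x = yz$, $z^x = z^{-1}$ and $z^y = z^{-1}$, equivalently $x^y = xz^{-1}$.

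For $x^2$ I would note that it obviously commutes with $x$, and then compute $z^{x^2} = (z^x)^x = (z^{-1})^x = z$ and $y^{x^2} = (y^x)^x = (yz)^x = y^x z^x = (yz)z^{-1} = y$; hence $x^2$ commutes with $x$, $y$, $z$, so $x^2 \in Z(G)$. For $y^2$ the same method gives $z^{y^2} = (z^y)^y = (z^{-1})^y = z$ and $x^{y^2} = (x^y)^y = (xz^{-1})^y = x^y (z^{-1})^y = (xz^{-1})z = x$, using $(z^{-1})^y = (z^y)^{-1} = z$; thus $y^2 \in Z(G)$. The underlying mechanism is that $z$ is inverted by conjugation by $x$ and by $y$, so an even number of conjugations restores it, while the twist $y^x = yz$ is cancelled exactly upon conjugating by $x^2$.

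The decisive observation for $H$ is that its defining commutator relations $[b,a] = c$, $[c,a] = c^{-2}$, $[c,b] = c^{-2}$ have the identical form to those of $G$ under the correspondence $x,y,z \leftrightarrow a,b,c$, and every computation above used only these commutator relations — the power relations $y^{2^m} = 1$ of $G$ and $b^{2^m} = a^{2^m}$ of $H$ never entered. Consequently the verbatim argument yields $a^2 \in Z(H)$ and $b^2 \in Z(H)$.

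I do not expect a genuine obstacle: the whole computation is elementary conjugation algebra. The only place demanding care is consistency with the commutator and conjugation conventions, so that, for instance, $y^x = yz$ rather than $yz^{-1}$; getting a sign wrong here would be the one way to derail an otherwise mechanical calculation.
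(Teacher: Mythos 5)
Your proof is correct; the paper states this lemma without proof as an elementary fact, and your direct verification via the conjugation formulas $y^x=yz$, $z^x=z^y=z^{-1}$, $x^y=xz^{-1}$ (which match the conjugation action the paper itself records later, in the proof of its Lemma on $|\CS(H)|-|\CS(G)|$) is exactly the intended argument. The observation that only the commutator relations are used, so the computation transfers verbatim to $H$, is also the right way to handle both groups at once.
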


\begin{lem}\label{lem:ThetaModTheta2}
    Under \cref{hyp:common}, $\Theta/\Theta^2 \cong \idealn/\idealn^2 \oplus FA \oplus FB$.
\end{lem}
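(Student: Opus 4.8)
The plan is to reduce the statement to the two structural results already in hand: the relative–augmentation decomposition of \cref{prop:RelAug} and Jennings' description of $\Delta(FH)/\Delta(FH)^2$ in \cref{prop:JenningsTheory}. First I would record the two standing facts that make those results applicable. Since $\idealn$ is a maximal ideal of $S$, the quotient $F = S/\idealn$ is a field, and because $2 \in \idealn$ forces $2 = 0$ in $F$, it has characteristic $2$. As $H$ is a finite $2$-group, both propositions apply with the prime $p = 2$.

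Next I would apply \cref{prop:RelAug} to $S$, $\idealn$ and the group $H$, obtaining an isomorphism of $F$-modules
\begin{equation*}
    \Theta/\Theta^2 \cong \idealn/\idealn^2 \oplus \Delta(FH)/\Delta(FH)^2.
\end{equation*}
It then remains only to identify the second summand. For this I would pin down a minimal generating set of $H$. The defining relation $[b, a] = c$ shows that $c$ lies in the derived subgroup $H'$, hence in the Frattini subgroup $\Phi(H) = H' H^2$; thus $H = \langle a, b \rangle$ and $H/\Phi(H)$ is generated by the images of $a$ and $b$. On the other hand $H$ is non-abelian (as $c \neq 1$), so it is not cyclic and needs at least two generators. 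Therefore $\{a, b\}$ is a minimal generating set and $H/\Phi(H) \cong C_2 \times C_2$. By \cref{prop:JenningsTheory}, the classes of $A = a - 1$ and $B = b - 1$ then form an $F$-basis of $\Delta(FH)/\Delta(FH)^2$.

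Finally I would combine the two displays. Under the isomorphism of \cref{prop:RelAug} the second summand is the image of $\Delta(SH)$ inside $\Theta/\Theta^2$, and since $A, B \in \Delta(SH) \subseteq \Theta$ map precisely to the Jennings basis, their images span complementary lines $FA$ and $FB$ there. This yields $\Theta/\Theta^2 \cong \idealn/\idealn^2 \oplus FA \oplus FB$, as claimed. No step presents a real obstacle; the only point deserving (brief) care is the verification that $a$ and $b$ stay independent modulo $\Phi(H)$, that is, that neither generator is redundant. This follows from $H$ being non-abelian together with $c \in \Phi(H)$, which together force the Frattini rank to be exactly $2$.
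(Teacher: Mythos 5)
Your proposal is correct and follows exactly the paper's route: the paper's proof is a one-line citation of \cref{prop:RelAug} and \cref{prop:JenningsTheory}, and your argument simply fills in the details of that combination, including the (correct) verification that $\{a,b\}$ is a minimal generating set of $H$ because $c=[b,a]\in H'\leq\Phi(H)$ and $H$ is non-abelian.
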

\begin{proof}
    This follows from \cref{prop:RelAug,prop:JenningsTheory}.
\end{proof}

First, we need to know how to approximate powers and commutators.

\begin{lem}\label{lem:commBA}
    Under \cref{hyp:common}, $[B, A] \equiv C \mod \Theta^3$.
\end{lem}
\begin{proof}
    Note that $[B, A] = (1 + A + B + AB)C$.
    \cref{lem:DimensionSubgroupsAndGamma} shows that $C \in \Theta^2$, and the lemma follows.
\end{proof}

\begin{lem}\label{lem:square}
    Under \cref{hyp:common}, $(\alpha A + \beta B)^2 \equiv \alpha^2 A^2 + \beta^2 B^2 + \alpha\beta C \mod \Theta^3$.
\end{lem}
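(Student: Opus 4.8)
The plan is to expand the square directly and then reduce the resulting cross term using the commutator congruence of \cref{lem:commBA}. Choosing representatives in $S$ for the scalars $\alpha, \beta \in F$ and using that scalars are central in $SH$, I would first write
\[
    (\alpha A + \beta B)^2 = \alpha^2 A^2 + \beta^2 B^2 + \alpha\beta(AB + BA),
\]
so that the entire problem collapses to understanding the symmetric product $AB + BA$ modulo $\Theta^3$.

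The decisive step is to rewrite this symmetric product in terms of the Lie commutator $[B, A] = BA - AB$. Since
\[
    AB + BA = [B, A] + 2AB,
\]
and since $A, B \in \Theta$ force $AB \in \Theta^2$, the term $2AB$ lies in $\idealn\Theta^2 \subseteq \Theta^3$ because $2 \in \idealn$ by \cref{hyp:common}. Hence $AB + BA \equiv [B, A] \bmod \Theta^3$, and \cref{lem:commBA} gives $[B, A] \equiv C \bmod \Theta^3$. Substituting this back into the expansion above yields the asserted congruence.

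The only genuinely delicate point is the treatment of $2AB$: over a field of characteristic $2$ one would have $AB + BA = [B, A]$ on the nose, but here $2 \neq 0$ in $S$, so the identity survives only after observing that $2 \in \idealn$ pushes $2AB$ into $\Theta^3$. This is precisely where the hypothesis $2 \in \idealn$ (rather than merely $4S = 0$) enters, and it is the feature separating the characteristic-$4$ computation from the modular one. A secondary point worth checking is that the congruence is well posed independently of the chosen lifts of $\alpha, \beta$: replacing a lift by one differing by an element of $\idealn$ alters $\alpha A$ by an element of $\idealn\Theta \subseteq \Theta^2$, whose contribution to the square lands in $\Theta\cdot\Theta^2 + \Theta^4 \subseteq \Theta^3$, so the statement is meaningful as an identity in the $F$-vector space $\Theta^2/\Theta^3$.
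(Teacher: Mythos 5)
Your proof is correct and follows essentially the same route as the paper: expand the square, write the cross term as $[B,A]+2AB$, observe that $2AB\in\idealn\Theta^2\subseteq\Theta^3$, and invoke \cref{lem:commBA}. The remark on independence of the chosen lifts of $\alpha,\beta$ is a sensible extra check but does not change the argument.
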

\begin{proof}
    \begin{align*}
        (\alpha A + \beta B)^2
         &\equiv \alpha^2 A^2 + \beta^2 B^2 + \alpha\beta [B, A] \mod \Theta^3 & &(\text{by $2AB \in \Theta^3$}) \\
         &\equiv \alpha^2 A^2 + \beta^2 B^2 + \alpha\beta C \mod \Theta^3      & &(\text{by \cref{lem:commBA}}).
        \qedhere
    \end{align*}
\end{proof}

\begin{lem}\label{lem:commCACB}
    Under \cref{hyp:common}, $[C, A] \equiv [C, B] \equiv 2C \mod \Theta^4$.
\end{lem}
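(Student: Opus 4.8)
The plan is to compute the Lie commutator $[C, A] = CA - AC$ explicitly in $SH$, reduce it modulo $\Theta^4$, and then observe that the computation for $[C, B]$ is word-for-word the same, since the relations $[c, a] = c^{-2}$ and $[c, b] = c^{-2}$ are symmetric in $a$ and $b$.

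First I would rewrite the group relation into a usable form. From $[c, a] = c^{-1}a^{-1}ca = c^{-2}$ one obtains $a^{-1}ca = c^{-1}$, that is, $ca = ac^{-1}$. Expanding the Lie commutator, the linear and constant terms cancel and leave $[C, A] = CA - AC = ca - ac = a(c^{-1} - c)$, so it remains to understand $c^{-1} - c$ modulo $\Theta^4$ and then multiply by $a = 1 + A$.

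Next I would expand $c^{-1} - c$ in terms of $C = c - 1$. Since $c = [b, a] \in \gamma_2(H)$, \cref{lem:DimensionSubgroupsAndGamma} gives $C \in \Theta^2$. Writing $c^{-1} - 1$ via $(1 + C)(1 + (c^{-1} - 1)) = 1$, the correction $C(c^{-1} - 1)$ lies in $\Theta^4$, so $c^{-1} - 1 \equiv -C \mod \Theta^4$ and hence $c^{-1} - c \equiv -2C \mod \Theta^4$. Multiplying by $a = 1 + A$ contributes an extra term $A(c^{-1} - c) \equiv -2AC \mod \Theta^5$; as $2 \in \idealn \subseteq \Theta$ and $C \in \Theta^2$ we have $2AC \in \Theta^4$, so this extra term vanishes modulo $\Theta^4$. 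Therefore $[C, A] \equiv c^{-1} - c \equiv -2C \mod \Theta^4$, and since $S$ has characteristic~$4$ we have $-2 = 2$, giving $[C, A] \equiv 2C$. Replacing $a$ by $b$ throughout, and using $[c, b] = c^{-2}$ in place of $[c, a] = c^{-2}$, yields $[C, B] \equiv 2C \mod \Theta^4$.

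The only delicate point is the degree bookkeeping. One must keep in mind that $2C$ itself lies only in $\Theta^3$ --- because $2 \in \idealn$ contributes degree~$1$ while $C \in \Theta^2$ --- so it is a genuine representative modulo $\Theta^4$, whereas the discarded contributions $C(c^{-1} - 1)$ and $2AC$ both land in $\Theta^4$. Tracking exactly which products fall into $\Theta^4$ (and which, like $2C$, do not) is essentially the whole content of the argument; once this is set up the congruence is immediate, with the characteristic-$4$ identity $-2 = 2$ converting the sign only at the very end.
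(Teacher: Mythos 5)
Your proof is correct and follows essentially the same route as the paper's: both reduce $[C,A]$ to a power-series expansion of an inverse of $c$ (the paper via $[C,A]=(1+A+C+AC)D$ with $D=c^{-2}-1=(1+C)^{-2}-1$, you via $[C,A]=a(c^{-1}-c)$), using \cref{lem:DimensionSubgroupsAndGamma} to place $C\in\Theta^2$ and the characteristic-$4$ identity $-2=2$ at the end. The degree bookkeeping you describe matches the paper's.
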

\begin{proof}
    \index{$d$ : Group Element}
    \index{$D$ : Algebra Element}

    Because the same proof works for $B$, we give the proof only for $A$.
    Set $d = [c, a]$ and $D = d - 1 \in SH$.
    Since $C \in \Theta^2$ and $D \in \Theta^3$ by \cref{lem:DimensionSubgroupsAndGamma},
    \begin{align*}
        [C, A]
         &= (1 + A + C + AC)D \equiv D                           & &(\text{by $D \in \Theta^3$})  \\
         &= (1 + C)^{-2} - 1                                     & &(\text{by $d = c^{-2}$})      \\
         &= (1 - 2C + 3C^2 - \dotsb) - 1 \equiv 2C \mod \Theta^4 & &(\text{by $C \in \Theta^2$}).
        \qedhere
    \end{align*}
\end{proof}

\begin{lem}\label{lem:modZSH}
    Under \cref{hyp:common}, $A^2 \equiv 2A$ and $B^2 \equiv 2B \mod Z(SH)$.
\end{lem}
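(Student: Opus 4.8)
The plan is to reduce the statement to the second particular case of \cref{lem:KummerMod4} and then invoke the centrality recorded in \cref{lem:x2y2}. The point is that $A^2$ and $B^2$ differ from $2A$ and $2B$ only by a square of a group element, and those squares are central in $H$.

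First I would apply \cref{lem:KummerMod4} with $n = 1$ to the group element $a$, which is legitimate because $S$ has characteristic~$4$, so $4S = 0$ under \cref{hyp:common}. This gives the identity
\begin{equation*}
    A^2 = (a - 1)^2 = (a^2 - 1) + 2(a - 1) = (a^2 - 1) + 2A.
\end{equation*}
By \cref{lem:x2y2} the element $a^2$ lies in $Z(H)$, hence commutes with every basis element of $SH$ and so $a^2 - 1 \in Z(SH)$. Therefore $A^2 \equiv 2A \mod Z(SH)$. The very same computation applies verbatim to $b$: \cref{lem:KummerMod4} yields $B^2 = (b^2 - 1) + 2B$, and since $b^2 \in Z(H)$ again by \cref{lem:x2y2}, the summand $b^2 - 1$ is central, whence $B^2 \equiv 2B \mod Z(SH)$.

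There is no genuine obstacle here; the lemma is a direct consequence of the elementary binomial identity of \cref{lem:KummerMod4} combined with the explicit relations of $H$ that force $a^2$ and $b^2$ to be central. The only subtlety worth flagging is the passage from centrality in the group to centrality in the group algebra: because $a^2 \in Z(H)$ means $a^2$ commutes with each group element, its image $a^2 - 1$ commutes with all of $SH$, so it indeed lies in $Z(SH)$. Note also that the hypothesis $b^{2^m} = a^{2^m}$ (rather than $b^{2^m} = 1$) plays no role at this level, since we only square $a$ and $b$ once; the distinguishing behaviour of $G$ versus $H$ will have to be extracted at a later stage of the argument.
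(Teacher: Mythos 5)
Your proof is correct and follows essentially the same route as the paper: both rest on the identity $A^2 + 2A = a^2 - 1$ (your version via \cref{lem:KummerMod4} with $n=1$ is the same identity, since $-2 = 2$ in characteristic $4$) together with the centrality of $a^2$ and $b^2$ from \cref{lem:x2y2}. The paper simply expands $(a-1)^2$ directly rather than citing \cref{lem:KummerMod4}, which is an immaterial difference.
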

\begin{proof}
    Note that $A^2 +2A = a^2-1$ and $B^2+2B = b^2-1$.
    Since $a^2$ and $b^2$ are central in $H$ by \cref{lem:x2y2}, $A^2 + 2A$ and $B^2 + 2B$ are central in $SH$.
\end{proof}

\begin{lem}\label{lem:fourth}
    Under \cref{hyp:common},
    \begin{equation*}
        (\alpha A + \beta B)^4 \equiv \alpha^4 A^4 + \beta^4 B^4 + \alpha^2\beta^2 C^2 \mod \Theta^5.
    \end{equation*}
\end{lem}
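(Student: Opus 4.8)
The plan is to compute $(\alpha A + \beta B)^4$ as the square of $(\alpha A + \beta B)^2$, feeding in the $\Theta^3$-approximation already established in \cref{lem:square}. Writing $M = \alpha^2 A^2 + \beta^2 B^2 + \alpha\beta C$, that lemma gives $(\alpha A + \beta B)^2 = M + E$ with $E \in \Theta^3$, and both $M$ and $E$ lie in $\Theta^2$. Squaring, the mixed terms $ME + EM + E^2$ lie in $\Theta^2\Theta^3 + \Theta^3\Theta^2 + \Theta^6 \subseteq \Theta^5$, so it suffices to compute $M^2$ modulo $\Theta^5$. This first step is pure bookkeeping: the coarse $\Theta^3$-error in the square of $\alpha A + \beta B$ is invisible modulo $\Theta^5$.

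Expanding $M^2$, the diagonal terms yield exactly $\alpha^4 A^4 + \beta^4 B^4 + \alpha^2\beta^2 C^2$ (the scalars $\alpha, \beta$, lifted to $S$, are central, and the congruence is independent of the lift). It then remains to show that the three anticommutator cross terms $\alpha^2\beta^2(A^2B^2 + B^2A^2)$, $\alpha^3\beta(A^2 C + CA^2)$ and $\alpha\beta^3(B^2 C + CB^2)$ all lie in $\Theta^5$. This is the heart of the argument, and where the characteristic-$4$ hypothesis does the work.

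For the first cross term, \cref{lem:x2y2} (equivalently \cref{lem:modZSH}) gives that $a^2 - 1 = A^2 + 2A$ and $b^2 - 1 = B^2 + 2B$ are central in $SH$; substituting $A^2 = (a^2-1) - 2A$ and $B^2 = (b^2-1) - 2B$ and expanding, the central parts cancel and the remaining cross terms carry a factor $4 = 0$, so $A^2$ and $B^2$ commute. Hence $A^2 B^2 + B^2 A^2 = 2A^2 B^2 \in \idealn\Theta^4 \subseteq \Theta^5$, since $A^2 B^2 \in \Theta^4$ and $2 \in \idealn$. For the remaining two terms I would substitute $A^2 = (a^2-1) - 2A$ once more, obtaining $A^2 C + CA^2 = 2(a^2-1)C - 2(AC + CA)$; here $2(a^2-1)C \in 2\Theta^4 \subseteq \Theta^5$, while $AC + CA = 2AC - [A, C]$ gives $2(AC+CA) = -2[A, C]$ using $4 = 0$. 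By \cref{lem:commCACB} we have $[A, C] = -[C, A] \equiv -2C \mod \Theta^4$, so $2[A, C] \in 2\Theta^4 \subseteq \Theta^5$, whence $A^2 C + CA^2 \in \Theta^5$; the identical computation with $B$ disposes of $B^2 C + CB^2$.

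The main obstacle is just the noncommutative bookkeeping in these three cross terms: the point is that each anticommutator can be traded for twice a product lying in $\Theta^4$ plus a commutator, and that both the doubling (since $2\Theta^4 \subseteq \Theta^5$) and the relation $4 = 0$ conspire to push everything into $\Theta^5$. Beyond \cref{lem:square}, the only inputs required are the centrality of $a^2 - 1$ and $b^2 - 1$ (\cref{lem:x2y2}, \cref{lem:modZSH}) and the commutator estimate $[C, A] \equiv [C, B] \equiv 2C \mod \Theta^4$ (\cref{lem:commCACB}).
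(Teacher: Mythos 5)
Your proposal is correct and follows essentially the same route as the paper: reduce to squaring $M=\alpha^2A^2+\beta^2B^2+\alpha\beta C$ modulo $\Theta^5$ via \cref{lem:square}, then kill the cross terms using the centrality of $a^2-1$ and $b^2-1$ (\cref{lem:modZSH}) together with $[C,A]\equiv[C,B]\equiv 2C \bmod \Theta^4$ (\cref{lem:commCACB}) and the relations $2\Theta^4\subseteq\Theta^5$, $4=0$. The only difference is presentational: the paper writes each anticommutator as $2(\text{product})+(\text{Lie commutator})$ and then substitutes $A^2\equiv 2A \bmod Z(SH)$, whereas you substitute the central elements directly; the content is identical.
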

\begin{proof}
    Recall that $C \in \Theta^2$ by \cref{lem:DimensionSubgroupsAndGamma}.
    First \cref{lem:square} shows that
    \begin{align*}
        (\alpha A + \beta B)^4
         &\equiv (\alpha^2 A^2 + \beta^2 B^2 + \alpha\beta C)^2 \mod \Theta^5                                      \\
        \intertext{
            and, in the same way, we get
        }
        (\alpha A + \beta B)^4
         &\equiv \alpha^4 A^4 + \beta^4 B^4 + \alpha^2\beta^2 C^2                                                  \\
         &\qquad
        + \alpha^2\beta^2 [B^2, A^2] + \alpha^3\beta [C, A^2] + \alpha\beta^3 [C, B^2]
        \mod \Theta^5.                                                                                             \\
        \intertext{
            Applying \cref{lem:modZSH} yields
        }
        (\alpha A + \beta B)^4
         &\equiv \alpha^4 A^4 + \beta^4 B^4 + \alpha^2\beta^2 C^2 + 2\alpha^3\beta [C, A]  + 2\alpha\beta^3 [C, B]
        \mod \Theta^5                                                                                              \\
        \intertext{
            and, by \cref{lem:commCACB}, we conclude that
        }
        (\alpha A + \beta B)^4
         &\equiv \alpha^4 A^4 + \beta^4 B^4 + \alpha^2\beta^2 C^2
        \mod \Theta^5.
        \qedhere
    \end{align*}
\end{proof}

\begin{lem}\label{lem:power}
    Under \cref{hyp:common},
    \begin{equation*}
        (\alpha A + \beta B)^{2^{m + 1}} \equiv (\alpha + \beta)^{2^{m + 1}}
        A^{2^{m + 1}} \mod \Theta^{1 + 2^{m + 1}}.
    \end{equation*}
\end{lem}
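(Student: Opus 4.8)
The plan is to prove the statement by an induction that strengthens the claim so as to keep track of a single surviving cross term. Concretely, I would show that for every $k$ with $2 \le k \le m + 1$,
\[
    (\alpha A + \beta B)^{2^k} \equiv \alpha^{2^k} A^{2^k} + \beta^{2^k} B^{2^k} + (\alpha\beta)^{2^{k - 1}} C^{2^{k - 1}} \mod \Theta^{1 + 2^k}.
\]
The base case $k = 2$ is exactly \cref{lem:fourth} (using $\alpha^2\beta^2 = (\alpha\beta)^2$). The target statement then follows by putting $k = m + 1$ and simplifying the three surviving terms, as described in the last paragraph.

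For the inductive step I would square the congruence at level $k$. Writing $(\alpha A + \beta B)^{2^k} = U + V + W + E$ with $U = \alpha^{2^k}A^{2^k}$, $V = \beta^{2^k}B^{2^k}$, $W = (\alpha\beta)^{2^{k - 1}}C^{2^{k - 1}}$ and $E \in \Theta^{1 + 2^k}$, I first note that $U, V, W \in \Theta^{2^k}$, using $C \in \Theta^2$ from \cref{lem:DimensionSubgroupsAndGamma}. Hence every product involving $E$ lands in $\Theta^{2^k}\Theta^{1 + 2^k} \subseteq \Theta^{1 + 2^{k + 1}}$, so the error is negligible and it suffices to expand $(U + V + W)^2$. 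The diagonal terms give $U^2 + V^2 + W^2 = \alpha^{2^{k + 1}}A^{2^{k + 1}} + \beta^{2^{k + 1}}B^{2^{k + 1}} + (\alpha\beta)^{2^k}C^{2^k}$, which is precisely the right-hand side at level $k + 1$.

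The crux is the three cross terms $UV + VU$, $UW + WU$, $VW + WV$. Here I would invoke \cref{lem:x2y2}: since $a^2, b^2 \in Z(H)$ and $k \geq 2$, the group elements $a^{2^k}, a^{2^{k - 1}}, b^{2^k}, b^{2^{k - 1}}$ are all central in $H$, so by \cref{lem:KummerMod4} the elements $A^{2^k}$ and $B^{2^k}$ are central in $SH$. Consequently $U$ and $V$ are central, each cross term collapses to twice a product, for instance $UW + WU = 2UW$, and since $2 \in \idealn$ and $U, W \in \Theta^{2^k}$ this lies in $\idealn\Theta^{2^{k + 1}} \subseteq \Theta^{1 + 2^{k + 1}}$. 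Note that $W$ itself need not be central; only the centrality of $U$ and $V$ is used. This closes the induction. The main obstacle is exactly this control of the cross terms in the squaring step, and the decisive input is the centrality supplied by \cref{lem:x2y2}, which forces the factor $2 \in \idealn$ that pushes each cross term into the next layer of the filtration.

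Finally, at $k = m + 1$ I would simplify the three surviving terms. The defining relation $b^{2^m} = a^{2^m}$ of $H$ gives, via \cref{lem:KummerMod4}, the exact identity $B^{2^{m + 1}} = A^{2^{m + 1}}$; the relation $c^{2^\ell} = 1$ together with $m > \ell$ yields $C^{2^m} = 0$, since $C^{2^{\ell + 1}} = (2(c^{2^{\ell - 1}} - 1))^2 = 0$ as $4S = 0$; and the Frobenius endomorphism of the residue field gives $\alpha^{2^{m + 1}} + \beta^{2^{m + 1}} \equiv (\alpha + \beta)^{2^{m + 1}} \mod \idealn$, the discrepancy being absorbed after multiplication by $A^{2^{m + 1}} \in \Theta^{2^{m + 1}}$. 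Combining these three observations gives $(\alpha A + \beta B)^{2^{m + 1}} \equiv (\alpha + \beta)^{2^{m + 1}} A^{2^{m + 1}} \mod \Theta^{1 + 2^{m + 1}}$, as desired.
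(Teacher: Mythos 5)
Your proposal is correct and takes essentially the same route as the paper: both start from \cref{lem:fourth}, use the centrality of the powers of $A$ and $B$ coming from \cref{lem:x2y2} together with \cref{lem:KummerMod4} to push the cross terms into $\idealn\Theta^{2^{m+1}} \subseteq \Theta^{1+2^{m+1}}$, and finish with $B^{2^{m+1}} = A^{2^{m+1}}$, $C^{2^m} = 0$ and the Frobenius identity for the scalars. The only difference is organizational: you iterate squaring via a strengthened induction that carries the term $(\alpha\beta)^{2^{k-1}}C^{2^{k-1}}$ along, whereas the paper raises the congruence of \cref{lem:fourth} to the power $2^{m-1}$ in a single application of \cref{lem:KummerMod4}.
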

\begin{proof}
    First applying \cref{lem:fourth} yields
    \begin{equation*}
        (\alpha A + \beta B)^{2^{m + 1}}
        \equiv (\alpha^4 A^4 + \beta^4 B^4 + \alpha^2\beta^2 C^2)^{2^{m - 1}}
        \mod \Theta^{1 + 2^{m + 1}}.
    \end{equation*}
    To compute this further observe that $A^4$ is central in $SH$ as $A^4 = a^4 + 2a^2 + 1$ and $a^2$ is central in $H$ by \cref{lem:x2y2}.
    The same is true for $B^4$ as well.
    Hence it follows from \cref{lem:KummerMod4} that
    \begin{equation*}
        (\alpha A + \beta B)^{2^{m + 1}}
        \equiv \alpha^{2^{m + 1}} A^{2^{m + 1}} + \beta^{2^{m + 1}} B^{2^{m + 1}} + \alpha^{2^m}\beta^{2^m} C^{2^m}
        \mod \Theta^{1 + 2^{m + 1}},
    \end{equation*}
    where we used $2A^{2^m}
        B^{2^m} \equiv 2A^{2^m}C^{2^{m - 1}} \equiv 2B^{2^m}C^{2^{m - 1}} \equiv 0$ here, which also follows using \cref{lem:KummerMod4}.
    Now $a^{2^m} = b^{2^m}$ and \cref{lem:KummerMod4} show that
    \begin{equation*}
        A^{2^{m + 1}} = (a^{2^{m + 1}} - 1) - 2(a^{2^m} - 1) = (b^{2^{m + 1}} - 1) - 2(b^{2^m} - 1) = B^{2^{m + 1}}.
    \end{equation*}
    Similarly, $m > \ell$, $c^{2^\ell} = 1$ and \cref{lem:KummerMod4} show that
    \begin{equation*}
        C^{2^m} = (c^{2^m} - 1) - 2(c^{2^{m - 1}} - 1) = 0.
    \end{equation*}
    Combining these, we conclude that
    \begin{align*}
        (\alpha A + \beta B)^{2^{m + 1}}
                                                  &\equiv (\alpha^{2^{m + 1}}  + \beta^{2^{m + 1}}) A^{2^{m + 1}}                            &                                   &                                      \\
                                                  &\equiv (\alpha^{2^{m + 1}}  + 2\alpha^{2^m}\beta^{2^m} + \beta^{2^{m + 1}}) A^{2^{m + 1}} &                                   &(\text{by $2A^{2^{m + 1}} \equiv 0$}) \\
                                                  &\equiv (\alpha  + \beta)^{2^{m + 1}}
        A^{2^{m + 1}} \mod \Theta^{1 + 2^{m + 1}} &                                                                                          &(\text{by \cref{lem:KummerMod4}}).
        \qedhere
    \end{align*}
\end{proof}

\begin{lem}\label{lem:A2m+1}
    Under \cref{hyp:common}, $A^{2^{m + 1}} \not\equiv 0 \mod \Theta^{1 + 2^{m + 1}}$.
\end{lem}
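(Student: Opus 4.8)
The plan is to detect $A^{2^{m+1}}$ by pushing it forward along a ring homomorphism onto the group algebra of a cyclic group, where \cref{lem:cyclic} gives complete control of the relative augmentation filtration. First I would construct a group homomorphism $\phi\colon H \to C_{2^n} = \gen{u \mid u^{2^n} = 1}$ determined by $a \mapsto u$, $b \mapsto u$, $c \mapsto 1$. Since the target is abelian, every commutator relation of $H$ maps to a trivial relation, while the only non-commutator relation $b^{2^m} = a^{2^m}$ is respected because both sides map to $u^{2^m}$; hence $\phi$ is well defined. Extending it $S$-linearly yields an $S$-algebra homomorphism $\hat\phi\colon SH \to SC_{2^n}$ which fixes the scalar ring $S$ and preserves augmentation, so $\hat\phi(\Theta) \subseteq \Delta(SC_{2^n} : \idealn)$ and therefore $\hat\phi(\Theta^k) \subseteq \Delta(SC_{2^n} : \idealn)^k$ for every $k$. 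Writing $U = u - 1$ as in \cref{lem:cyclic}, we have $\hat\phi(A) = U$, so it suffices to prove $U^{2^{m+1}} \notin \Delta(SC_{2^n} : \idealn)^{1 + 2^{m+1}}$; by contraposition this forces $A^{2^{m+1}} \notin \Theta^{1 + 2^{m+1}}$.

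The remaining work is a direct reading of \cref{lem:cyclic} with $k = 1 + 2^{m+1}$, for which I would split on how $m+1$ compares to $n$. When $n \geq m + 2$ one has $2^{m+1} \leq 2^{n-1}$, and the coefficient ideal attached to $U^{2^{m+1}}$ in $\Delta(SC_{2^n} : \idealn)^{1 + 2^{m+1}}$ simplifies to $\idealn^{1} = \idealn$ (in the boundary case $n = m+2$ the extra summand $2\idealn^{1 - 2^{n-1}} = 2S$ is absorbed into $\idealn$ because $2 \in \idealn$). Since $U^{2^{m+1}}$ is then a free basis element with coefficient $1 \notin \idealn$, it lies outside the claimed power. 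When $n = m + 1$ one instead invokes the relation $U^{2^n} = 2U^{2^{n-1}}$ coming from \cref{lem:KummerMod4}, so that $U^{2^{m+1}} = 2U^{2^{n-1}}$; here the relevant coefficient ideal is $\idealn^{1 + 2^{n-1}} + 2\idealn$, which is contained in $\idealn^2$, and the hypothesis $2 \notin \idealn^2$ again shows that $U^{2^{m+1}}$ escapes $\Delta(SC_{2^n} : \idealn)^{1 + 2^{m+1}}$.

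The main obstacle is precisely the boundary case $n = m + 1$: there the naive ``leading coefficient $1$'' argument is unavailable because the power $U^{2^{m+1}}$ has already wrapped around past $U^{2^n}$ and acquired the factor $2$, and it is exactly at this point that the hypothesis $2 \notin \idealn^2$ from \cref{hyp:common} becomes indispensable. Everything else is routine bookkeeping with the explicit description furnished by \cref{lem:cyclic}, together with the observation that $\hat\phi$ carries the filtration of $SH$ into that of $SC_{2^n}$.
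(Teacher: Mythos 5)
Your proof is correct and follows the same strategy as the paper: reduce to a cyclic quotient and read off the filtration from \cref{lem:cyclic}. The only difference is the choice of quotient: the paper collapses onto $C_{2^{m+1}}$ (via $N = \langle a^{2^{m+1}}, a^{-1}b, c\rangle$), so the wrap-around $U^{2^{m+1}} = 2U^{2^m}$ always occurs and the hypothesis $2 \notin \idealn^2$ is used uniformly, whereas your map onto $C_{2^n}$ forces the case split on $n = m+1$ versus $n \geq m+2$ that you correctly carry out.
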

\begin{proof}
    \index{$N$ : Finite Group}
    \index{$K$ : Finite Group}
    \index{$u$ : Group Element}
    \index{$U$ : Algebra Element}

    The basic idea of the proof is reducing the problem to the cyclic case.
    Consider the normal subgroup $N = \langle a^{2^{m + 1}}, a^{-1}b, c \rangle$ of $H$, let $K = \gen{ u \given u^{2^{m + 1}} = 1 }$ and set $U = u-1 \in SK$.
    Since $H/N \cong K$ and $SH/\Delta(SN)SH \cong SK$ by $n > m$, we have a surjective homomorphism $SH \to SK$ defined by $A \mapsto U$.
    As the homomorphism maps $\Delta(SH : \idealn)$ onto $\Delta(SK : \idealn)$, it suffices to prove that
    \begin{equation*}
        U^{2^{m + 1}} \not\equiv 0 \mod \Delta(SK : \idealn)^{1 + 2^{m + 1}}.
    \end{equation*}
    The coefficients of $U^{2^m}$ in the direct sum decomposition of $\Delta(SK : \idealn)^{1 + 2^{m + 1}}$ in \cref{lem:cyclic} are elements of $(\idealn^{1 + 2^m} + 2\idealn)$.
    By \cref{lem:KummerMod4}, we have $U^{2^{m + 1}} = 2U^{2^m}$ and as $2 \in \idealn$ and $2 \notin \idealn^2$, we get $2U^{2^m} \not\in (\idealn^{1 + 2^m} + 2\idealn)U^{2^m}$.
    Hence, $U^{2^{m + 1}} \not\equiv 0 \mod \Delta(SK : \idealn)^{1 + 2^{m + 1}}$.
\end{proof}

\begin{lem}\label{lem:2C}
    Under \cref{hyp:common}, $2C \not\equiv 0 \mod \Theta^4$.
\end{lem}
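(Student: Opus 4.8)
The plan is to mimic the reduction carried out in \cref{lem:A2m+1}, passing from $H$ to a quotient whose relative augmentation powers are completely understood --- here the dihedral group $D_{16}$ treated in \cref{lem:dihedral}. First I would observe that the assignment $a \mapsto u$, $b \mapsto v$, $c \mapsto w$ respects all defining relations of $H$: indeed $u^{2^n} = 1$ and $w^{2^\ell} = 1$ hold because $u^2 = 1$ and $w^4 = 1$ with $\ell \geq 2$, the relation $b^{2^m} = a^{2^m}$ becomes $1 = 1$ since $u^2 = v^2 = 1$, and the three commutator relations $[b,a] = c$, $[c,a] = c^{-2}$, $[c,b] = c^{-2}$ translate verbatim into those of $D_{16}$ using $w^{-2} = w^2$. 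By the universal property of presentations this defines a surjective group homomorphism $H \to D_{16}$, which extends $S$-linearly to a surjective algebra homomorphism $SH \to SD_{16}$ sending $A, B, C$ to $U, V, W$.

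Since this algebra map is the identity on the coefficient ring $S$ and sends group elements to group elements, it carries $\Theta = \idealn \oplus \Delta(SH)$ onto $\Delta(SD_{16} : \idealn) = \idealn \oplus \Delta(SD_{16})$, and hence the power $\Theta^4$ onto $\Delta(SD_{16} : \idealn)^4$. As $2C \mapsto 2W$, it suffices to prove that $2W \notin \Delta(SD_{16} : \idealn)^4$; for if $2C$ were to lie in $\Theta^4$, then its image $2W$ would land in the fourth power downstairs, a contradiction.

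To finish I would simply read off the claim from \cref{lem:dihedral}. Note that $W = U^0 V^0 W^1 \in \mathscr{B}$ with $\omega(W) = 2$, so in the direct sum decomposition of $\Delta(SD_{16} : \idealn)^4$ the coefficient ideal attached to the basis element $W$ is $\idealn^{4 - \omega(W)} = \idealn^2$. Because the elements $Q$ and $QW^2$ with $Q \in \mathscr{B}$ form an $S$-basis of $SD_{16}$, membership $2W \in \Delta(SD_{16} : \idealn)^4$ is equivalent to $2 \in \idealn^2$, which is excluded by \cref{hyp:common}.

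The routine part is the relation check and the bookkeeping verifying that the group homomorphism carries $\Theta^4$ onto the corresponding power of $\Delta(SD_{16} : \idealn)$. The one point demanding care is the final reading of \cref{lem:dihedral}: one must attach $2W$ to the basis vector $W$, whose coefficient ideal in degree $4$ is exactly $\idealn^2$, and not confuse it with the distinct basis vector $W^3 = WW^2$ occurring in the second family of summands. Once the correct coefficient ideal $\idealn^2$ is isolated, the hypothesis $2 \notin \idealn^2$ closes the argument immediately.
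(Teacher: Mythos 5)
Your proposal is correct and follows essentially the same route as the paper: reduce to the dihedral quotient $D_{16}$ via the surjection $a\mapsto u$, $b\mapsto v$, $c\mapsto w$, and read off from \cref{lem:dihedral} that the coefficient ideal of $W$ in $\Delta(SD_{16}:\idealn)^4$ is $\idealn^2$, so $2\notin\idealn^2$ finishes. The only cosmetic difference is that you verify the relations directly via the universal property, while the paper exhibits the kernel as the normal subgroup $\langle a^2, b^2, c^4\rangle$.
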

\begin{proof}
    \index{$N$ : Finite Group}
    \index{$K$ : Finite Group}
    \index{$u$ : Group Element}
    \index{$v$ : Group Element}
    \index{$w$ : Group Element}
    \index{$U$ : Algebra Element}
    \index{$V$ : Algebra Element}
    \index{$W$ : Algebra Element}

    The basic idea of the proof is reducing the problem to the dihedral case.
    Consider the normal subgroup $N = \langle a^2, b^2, c^4 \rangle$ of $H$ and let
    \begin{equation*}
        K = \gen{ u, v, w \given u^2 = 1, \ v^2 = 1, \ w^4 = 1, \ [v, u] = w, \ [w, u] = w^2, \ [w, v] = w^2 }.
    \end{equation*}
    Set $U = u - 1, V = v - 1, W = w - 1 \in SK$.
    Since $H/N \cong K$ and $SH/\Delta(SN)SH \cong SK$, we have a surjective homomorphism $SH \to SK$ defined by $A \mapsto U$, $B \mapsto V$ and $C \mapsto W$.
    As the homomorphism maps $\Delta(SH : \idealn)$ onto $\Delta(SK : \idealn)$, it suffices to prove that
    \begin{equation*}
        2W \not\equiv 0 \mod \Delta(SK : \idealn)^4.
    \end{equation*}
    By \cref{lem:dihedral} and $2W \notin \idealn^2W$ as $2 \notin \idealn^2$, we get $2W \not\equiv 0 \mod \Delta(SK : \idealn)^4$.
\end{proof}

\begin{proof}[Proof of \cref{prop:char4}]
    \index{$\psi$ : Algebra Homomorphism}
    \index{$Y$ : Algebra Element}
    \index{$\alpha$ : Field Element}
    \index{$\beta$ : Field Element}

    Suppose that there exists an algebra isomorphism $SG \to SH$ to obtain a contradiction.
    Then we may construct a normalized isomorphism $\psi\colon SG \to SH$ from this by \cite[p.~193]{PolcinoMiliesSehgal02}.
    In particular, it is compatible with the augmentation ideals.
    Set $Y = y - 1 \in SG$.
    By \cref{lem:ThetaModTheta2}, there are unique scalars $\alpha, \beta$ of $F$ such that
    \begin{alignat*}{2}
        \psi(Y) &\equiv \alpha A + \beta B & &\mod \Theta^2.
    \end{alignat*}
    Note that elements corresponding to $\idealn/\idealn^2$ are not needed in these approximations as $\psi$ is normalized.
    Since $\psi$ is an isomorphism of $S$-algebra and $Y \notin \Delta(SG : \idealn)^2$ by \cref{lem:ThetaModTheta2}, we have $(\alpha,\beta) \neq (0,0)$.
    For the rest of the proof we turn to showing the contrary.

    Recall the power relation $y^{2^m} = 1$ in $G$.
    Then $Y^{2^{m + 1}} = 0$ by \cref{lem:KummerMod4} and
    \begin{equation}\label{eq:f}
        0
        \equiv
        \psi(Y)^{2^{m + 1}}
        \equiv (\alpha + \beta)^{2^{m + 1}}
        A^{2^{m + 1}} \mod \Theta^{1 + 2^{m + 1}}
    \end{equation}
    by \cref{lem:power}.
    Also recall that $y^2$ is central in $G$ by \cref{lem:x2y2}.
    Then $Y^2 + 2Y = y^2 - 1$ is central in $SG$.
    As $\psi$ is an isomorphism, $\psi(Y)^2 + 2\psi(Y)$ must be central in $SH$.
    In particular, it must commute with $A$ and $B$.
    By \cref{lem:square},
    \begin{equation*}
        \psi(Y)^2 + 2\psi(Y) \equiv \alpha^2A^2 + \beta^2B^2 + \alpha\beta C + 2\alpha A + 2\beta B \mod \Theta^3.
    \end{equation*}
    So by \cref{lem:modZSH}, \[\psi(Y)^2 + 2\psi(Y) \equiv 2(\alpha + \alpha^2)A + 2(\beta + \beta^2)B + \alpha\beta C \mod Z(SH) + \Theta^3.
    \]
    Then it follows from \cref{lem:commBA,lem:commCACB} that
    \begin{alignat}{3}
        0 &\equiv [\psi(Y)^2 + 2\psi(Y), A] & &\equiv 2(\beta + \beta^2 + \alpha\beta)C   & &\mod \Theta^4, \label{eq:g} \\
        0 &\equiv [\psi(Y)^2 + 2\psi(Y), B] & &\equiv 2(\alpha + \alpha^2 + \alpha\beta)C & &\mod \Theta^4. \label{eq:h}
    \end{alignat}

    From \eqref{eq:f}, \eqref{eq:g} and \eqref{eq:h} we get
    \begin{equation*}
        \left\{
        \begin{aligned}
            \alpha + \beta = 0                \\
            \beta + \beta^2 + \alpha\beta = 0 \\
            \alpha + \alpha^2 + \alpha\beta = 0
        \end{aligned}
        \right.
    \end{equation*}
    by \cref{lem:A2m+1,lem:2C}.
    This gives $(\alpha, \beta) = (0, 0)$ and shows that $SG \not\cong SH$.
\end{proof}

\begin{proof}[Proof of \cref{main:A}]
    \index{$S$ : Commutative Ring}
    \index{$\idealn$ : Ideal of Ring}

    From the assumptions, the quotient ring $S = R/\idealr$ has characteristic~$4$ and $\idealn = \idealm/\idealr$ is a maximal ideal of $S$ with $2 \in \idealn$ and $2 \notin \idealn^2$.
    By~\cref{prop:char4}, $SG \not\cong SH$.
    Hence, $RG \not\cong RH$ by \cref{lem:reduce}.

    On the other hand, if $R$ has characteristic~$2$, then the proof of \cite[Theorem~3.1]{MargolisSakurai25} carries over verbatim just replacing the field in the proof by the ring $R$ which shows $RG \cong RH$ in this case.
\end{proof}

\section{Rational and complex group algebras}\label{sec:Rational}
\index{$T$ : Variable}
\index{$\nu$ : Function}
\index{$\totient$ : Function}
\index{$\Gamma$ : Finite Group}
\index{$\CS(\Gamma)$ : Set}

\cref{main:A} shows that $RG \cong RH$ when $R$ has characteristic~$2$.
It also shows that $RG \not\cong RH$ in many cases\footnote{Note that the ring $\ZZ[T]/(4, T^2 - 2)$ is excluded from \cref{main:A}, for instance.
} when the characteristic of $R$ is divisible by~$4$.
When $2$ is a unit in~$R$, the situation is more delicate as the following shows (cf.~\cite[Corollary~8.2]{Margolis22}).

\begin{prop}\label{prop:unit}
    \index{$G$ : Finite Group}
    \index{$H$ : Finite Group}

    Let $G$ and $H$ be the groups defined in \cref{main:A}.
    Then $\CC G \cong \CC H$, but $\QQ G \not\cong \QQ H$.
\end{prop}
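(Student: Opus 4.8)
The two halves of the statement call for entirely different tools, since $\CC G\cong\CC H$ depends only on character degrees whereas $\QQ G\cong\QQ H$ is governed by the arithmetic of the Wedderburn components.

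For the complex statement I would use Artin--Wedderburn: writing $\CC\Gamma\cong\bigoplus_{\chi}M_{\chi(1)}(\CC)$ with $\chi$ ranging over the irreducible characters, $\CC G\cong\CC H$ holds if and only if $G$ and $H$ have the same multiset of irreducible character degrees. The plan is to show that $G$ and $H$ are isoclinic: their presentations coincide in all commutator relations under $x\leftrightarrow a$, $y\leftrightarrow b$, $z\leftrightarrow c$, and differ only in the power relation $y^{2^m}=1$ versus $b^{2^m}=a^{2^m}$, where $a^{2^m}$ is central by \cref{lem:x2y2}. This central modification leaves the commutator map and the central quotient unchanged, so one can write down compatible isomorphisms $G/Z(G)\to H/Z(H)$ and $G'\to H'$. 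Since isoclinic groups of equal order have the same character-degree multiset (the linear counts agree because $|G:G'|=|H:H'|$, and the nonlinear degrees agree by P.~Hall's theory), this yields $\CC G\cong\CC H$. As a fallback one can instead compute the degrees directly by Clifford theory over the abelian normal subgroup $\langle a^2,b^2,c\rangle$.

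For the rational statement the plan is to exhibit a simple component of $\QQ H$ that does not occur in $\QQ G$. The natural candidate is dictated by the relation $b^{2^m}=a^{2^m}$, which is of generalized-quaternion type ($s^2$ equal to a central element) precisely where $G$ carries the dihedral-type relation $y^{2^m}=1$; this is the mechanism by which $\QQ Q_8$ acquires a quaternion division algebra absent from $\QQ D_8$ even though $\CC Q_8\cong\CC D_8$. Concretely I would single out the faithful irreducible characters on which this relation acts nontrivially and compute the Schur index of the associated component, expecting index $2$ (a quaternion algebra) on the $H$-side and index $1$ (a matrix algebra over a cyclotomic field) on the $G$-side. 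Here \cref{lem:reduce} lets me extend scalars to $\mathbb{R}$ or to $\QQ_2$ in order to isolate the relevant local index, and the description of the simple components of rational group algebras of metabelian groups in terms of (strong) Shoda pairs applies, since $G'\cong C_{2^\ell}\cong H'$.

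The hard part will be this Schur-index computation, and the difficulty is intrinsic: $G$ and $H$ agree on all the coarse invariants. They have the same character degrees by the first part, and a direct count shows they have the same number of elements squaring to the identity, namely $\#\{g:g^2=1\}=8$ in each group (the would-be extra involution $y^{2^{m-1}}$ of $G$ has no analogue in $H$, since $(b^{2^{m-1}})^2=a^{2^m}\neq1$, yet the two totals nonetheless balance); in fact the groups appear to share the whole element-order distribution. Consequently the global Frobenius--Schur sum $\sum_\chi\nu_2(\chi)\chi(1)=\#\{g:g^2=1\}$ coincides, so the distinction is invisible to the real/quaternionic balance and must instead be located at the prime $2$ inside a single component. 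Pinning down that local index is the technical heart of the argument.
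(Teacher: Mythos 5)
Your complex half is sound and is a genuinely different (if slightly heavier) route than the paper's. The isoclinism you describe does hold: $\bar x\mapsto\bar a$, $\bar y\mapsto\bar b$, $\bar z\mapsto\bar c$ gives $G/Z(G)\cong H/Z(H)$ compatibly with $z\mapsto c$ on the derived subgroups, and isoclinic groups of equal order share their character-degree multiset, so $\CC G\cong\CC H$. The paper gets the same conclusion more cheaply: $C_G(G')$ and $C_H(H')$ are abelian maximal subgroups, so every irreducible degree is $1$ or $2$ and the Wedderburn decomposition of $\CC\Gamma$ is determined by $|\Gamma|$ and $|\Gamma/\Gamma'|$ alone.

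The rational half, however, has a genuine gap: no distinguishing invariant is ever computed. You propose to exhibit a simple component of $\QQ H$ with Schur index $2$ absent from $\QQ G$, but you explicitly defer the Schur-index computation that you yourself identify as the technical heart, and the expectation behind it is not justified. The $Q_8$-versus-$D_8$ mechanism needs an element inverting a cyclic subgroup and squaring into it nontrivially; here $a$ inverts $\langle c\rangle$ while $a^2$ is central with $\langle a^2\rangle\cap\langle c\rangle=1$, so whether the relevant crossed products $(\QQ(\lambda)/F,\sigma,\lambda(a^2))$ are division algebras comes down to delicate norm computations that could just as easily come out the same way for $G$ (where the analogous twisting is $\lambda(x^2)$). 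The paper avoids division algebras entirely: by Artin's induction theorem the number of Wedderburn components of $\QQ\Gamma$ equals $|\CS(\Gamma)|$, the number of conjugacy classes of cyclic subgroups, and a direct count (\cref{lem:diff}, using the closed formula of \cref{lem:count}) gives $|\CS(H)|-|\CS(G)|=(n-m)2^{m-1}(2^{\ell-1}-1)>0$. Thus the two rational group algebras already differ in the \emph{number} of simple components, a coarser invariant than any Schur index, and this is the step your proposal is missing. (Your side remarks — equal counts of involutions, the Frobenius--Schur sum — are consistent with this but are neither verified in full nor needed.)
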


A proof will be given after a series of lemmas.
Let $\nu$ denote the $2$-adic valuation on the integers and let $\totient$ denote Euler's totient function.
We write $\CS(\Gamma)$ for the set of conjugacy classes of cyclic subgroups of a finite group~$\Gamma$.
Then
\begin{equation*}
    |\CS(\Gamma)| = \sum_{g \in \Gamma} \frac{1}{|\Gamma : N_\Gamma(\langle g \rangle)|} \times \frac{1}{\totient(|g|)}
\end{equation*}
as $|\Gamma : N_\Gamma(\langle g \rangle)|$ equals the number of conjugates of $\langle g \rangle$ and $\totient(|g|)$ equals the number of generators of $\langle g \rangle$.
The following closed formula for the number of cyclic subgroups of a finite abelian $p$-group will be useful for us.

\begin{lem}\label{lem:count}
    \index{$p$ : Prime Number}
    \index{$n$ : Nonnegative Integer}
    \index{$m$ : Nonnegative Integer}
    \index{$\ell$ : Nonnegative Integer}

    Let $p$ be a prime and $n \ge m \ge \ell \ge 0$ integers.
    Then
    \begin{align*}
         &|\CS(C_{p^n} \times C_{p^m} \times C_{p^\ell})|                                                                                            \\
         &\qquad = (n - m)p^{m + \ell} + (p^{2\ell + 1} + p^{2\ell})\frac{p^{m - \ell} - 1}{p - 1} + (p^2 + p + 1)\frac{p^{2\ell} - 1}{p^2 - 1} + 1.
    \end{align*}
\end{lem}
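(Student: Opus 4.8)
The plan is to count the cyclic subgroups order by order, bypassing the normalizer sum entirely. Since $\Gamma = C_{p^n} \times C_{p^m} \times C_{p^\ell}$ is abelian, every cyclic subgroup of order $p^k$ has exactly $\totient(p^k) = p^{k-1}(p-1)$ generators, and these generators are precisely the elements of order $p^k$. Hence the number of cyclic subgroups of order $p^k$ (for $k \geq 1$) equals
\begin{equation*}
    \frac{|\Omega_k(\Gamma)| - |\Omega_{k-1}(\Gamma)|}{p^{k-1}(p-1)},
\end{equation*}
where $|\Omega_k(\Gamma)|$ counts the elements $g$ with $g^{p^k}=1$. For the direct product this factors as $|\Omega_k(\Gamma)| = p^{\min(k,n)}\,p^{\min(k,m)}\,p^{\min(k,\ell)}$. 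Adding the trivial subgroup, the count becomes $|\CS(\Gamma)| = 1 + \sum_{k=1}^{n} (|\Omega_k(\Gamma)| - |\Omega_{k-1}(\Gamma)|)/(p^{k-1}(p-1))$, since the exponent of $\Gamma$ is $p^n$.

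Next I would evaluate $|\Omega_k(\Gamma)|$ in the three ranges $0 \leq k \leq \ell$, $\ell \leq k \leq m$, and $m \leq k \leq n$, obtaining $p^{3k}$, $p^{2k+\ell}$, and $p^{k+m+\ell}$ respectively. These formulas agree at the boundaries $k=\ell$ and $k=m$, so each range formula may be used for the full closed interval; this is exactly what makes the telescoped differences $|\Omega_k|-|\Omega_{k-1}|$ come out uniformly within each range. I would then split the sum $\sum_{k=1}^{n}$ accordingly: the range $1 \leq k \leq \ell$, the range $\ell < k \leq m$, and the range $m < k \leq n$.

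Finally I would evaluate the three resulting geometric sums. On $m < k \leq n$ each summand simplifies to the constant $p^{m+\ell}$, contributing $(n-m)p^{m+\ell}$. On $\ell < k \leq m$ each summand is $p^{k+\ell-1}(p+1)$, and summing the geometric progression yields $(p+1)p^{2\ell}(p^{m-\ell}-1)/(p-1) = (p^{2\ell+1}+p^{2\ell})(p^{m-\ell}-1)/(p-1)$. On $1 \leq k \leq \ell$ each summand is $p^{2k-2}(p^2+p+1)$, giving $(p^2+p+1)(p^{2\ell}-1)/(p^2-1)$. Together with the $+1$ for the trivial subgroup these reproduce the four terms of the stated formula. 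The main point requiring care is not any single computation but the bookkeeping at the three boundaries $k=\ell,m,n$, where one must confirm that the piecewise formulas for $|\Omega_k(\Gamma)|$ glue so that the differences telescope cleanly; once that is checked, the remaining geometric-series evaluations are routine.
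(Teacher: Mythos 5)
Your proposal is correct, and it takes a genuinely different route from the paper. The paper starts from T\'oth's explicit triple-sum formula $\sum \totient(p^i)\totient(p^j)\totient(p^k)/\totient(\lcm(p^i,p^j,p^k))$ and derives three difference equations in the \emph{parameters} $n$, $m$, $\ell$ (incrementing $n$, then $m$, then $\ell$), which it then telescopes to reach the closed form. You instead stratify by the \emph{order} of the subgroup: in an abelian group the cyclic subgroups of order $p^k$ are counted by $(|\Omega_k(\Gamma)|-|\Omega_{k-1}(\Gamma)|)/\totient(p^k)$, and $|\Omega_k(\Gamma)|=p^{\min(k,n)+\min(k,m)+\min(k,\ell)}$ factors over the direct product. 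Your computations check out: the three ranges $1\le k\le\ell$, $\ell<k\le m$, $m<k\le n$ give summands $p^{2k-2}(p^2+p+1)$, $p^{k+\ell-1}(p+1)$ and $p^{m+\ell}$ respectively, the piecewise formulas for $|\Omega_k|$ agree at $k=\ell$ and $k=m$ so the telescoping is clean, and the three geometric sums plus the trivial subgroup reproduce the four terms of the statement (the degenerate cases $\ell=0$, $m=\ell$, $n=m$ are handled automatically since the corresponding sums are empty and the corresponding terms vanish). What your approach buys is self-containedness: it avoids the citation to T\'oth entirely and uses only the elementary fact that each cyclic subgroup of order $p^k$ has $\totient(p^k)$ generators, all of order $p^k$. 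What the paper's approach buys is that the difference equations isolate exactly how the count changes when one invariant factor grows, which is the form in which the lemma is actually applied later (comparing $C_{2^{n-1}}\times C_{2^m}\times C_{2^\ell}$ with $C_{2^n}\times C_{2^{m-1}}\times C_{2^\ell}$).
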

\begin{proof}
    \index{$i$ : Nonnegative Integer}
    \index{$j$ : Nonnegative Integer}
    \index{$k$ : Nonnegative Integer}

    There are many known formulas to calculate the number of cyclic subgroups of a finite abelian group and we will use the one given in~\cite[Theorem~1]{Toth12}.
    In our case it reads
    \begin{equation*}
        |\CS(C_{p^n} \times C_{p^m} \times C_{p^\ell})| = \sum_{\substack{0 \le i \le n \\
        0 \le j \le m                                                                   \\
                0 \le k \le \ell}}
        \frac{\totient(p^i)\totient(p^j)\totient(p^k)}{\totient(\lcm(p^i, p^j, p^k))}.
    \end{equation*}
    To get the closed formula, we derive three difference equations first.
    \begin{equation*}
        \left\{
        \begin{aligned}
            |\CS(C_{p^{n + 1}} \times C_{p^m} \times C_{p^\ell})| - |\CS(C_{p^n} \times C_{p^m} \times C_{p^\ell})|                         & = p^{m + \ell},                              \\
            |\CS(C_{p^{m + 1}} \times C_{p^{m + 1}} \times C_{p^\ell})| - |\CS(C_{p^m} \times C_{p^m} \times C_{p^\ell})|                   & = p^{m + \ell + 1} + p^{m + \ell},           \\
            |\CS(C_{p^{\ell + 1}} \times C_{p^{\ell + 1}} \times C_{p^{\ell + 1}})| - |\CS(C_{p^\ell} \times C_{p^\ell} \times C_{p^\ell})| & = p^{2\ell + 2} + p^{2\ell + 1} + p^{2\ell}.
        \end{aligned}
        \right.
    \end{equation*}
    We only show the second one in details as similar arguments show the rest as well.
    Recall that $\totient(p^{m + 1}) = p^{m + 1} - p^m$ and $\sum_{0 \le k \le \ell} \totient(p^k) = p^\ell$.
    Then
    \begin{align*}
         &|\CS(C_{p^{m + 1}} \times C_{p^{m + 1}} \times C_{p^\ell})| - |\CS(C_{p^m} \times C_{p^m} \times C_{p^\ell})| \\
         &\qquad=
        \sum_{\substack{ 0 \le j \le m                                                                                  \\
                0 \le k \le \ell}} \totient(p^j)\totient(p^k)
        + \sum_{\substack{ 0 \le i \le m                                                                                \\
                0 \le k \le \ell}} \totient(p^i)\totient(p^k)
        - \totient(p^{m + 1})\sum_{0 \le k \le \ell} \totient(p^k)                                                      \\
         &\qquad= p^{m + \ell} + p^{m + \ell} - (p^{m + 1} - p^m)p^\ell                                                 \\
         &\qquad= p^{m + \ell + 1} + p^{m + \ell}.
    \end{align*}

    From these difference equations, we obtain
    \begin{align*}
         &|\CS(C_{p^n} \times C_{p^m} \times C_{p^\ell})|                                                                                                 \\
         &\qquad= (n - m)p^{m + \ell} + |\CS(C_{p^m} \times C_{p^m} \times C_{p^\ell})|                                                                   \\
         &\qquad= (n - m)p^{m + \ell} + (p^{2\ell + 1} + p^{2\ell})\frac{p^{m - \ell} - 1}{p - 1} + |\CS(C_{p^\ell} \times C_{p^\ell} \times C_{p^\ell})| \\
         &\qquad = (n - m)p^{m + \ell} + (p^{2\ell + 1} + p^{2\ell})\frac{p^{m - \ell} - 1}{p - 1} + (p^2 + p + 1)\frac{p^{2\ell} - 1}{p^2 - 1} + 1.
    \end{align*}
\end{proof}

\begin{lem}\label{lem:normalGH}
    \index{$\Gamma$ : Finite Group}
    \index{$G$ : Finite Group}
    \index{$H$ : Finite Group}
    \index{$K$ : Finite Group}

    Let $\Gamma$ be the group $G$ or $H$ defined in \cref{main:A} and $K$ a cyclic subgroup of $C_\Gamma(\Gamma')$.
    Then
    \begin{equation*}
        K \trianglelefteq \Gamma \iff \text{$K \leq Z(\Gamma)$ or $K \leq \Omega(\Gamma : \Gamma')$}.
    \end{equation*}
\end{lem}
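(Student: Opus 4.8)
The plan is to reduce the whole question to the action of a single generator on an abelian subgroup; I describe it for $\Gamma = G$, the case $\Gamma = H$ being analogous. Reading off the relations, $\Gamma' = \langle z \rangle$ is cyclic of order $2^\ell$ and $z^x = z^y = z^{-1}$, so (as $\ell \geq 2$) neither $x$ nor $y$ centralises $z$; hence $C := C_\Gamma(\Gamma')$ is the kernel of the conjugation action of $\Gamma$ on $\Gamma'$, of index $2$, and $\Gamma = \langle C, x \rangle$. Since $x^2, y^2 \in Z(\Gamma)$ by \cref{lem:x2y2} and $(xy)^2 = x^2 y^2 z^{-1}$, one checks that $C = \langle x^2, y^2, z, xy \rangle$ is abelian. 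Writing $\sigma$ for conjugation by $x$, we have $\sigma^2 = \mathrm{id}$ (because $x^2$ is central), so for a cyclic $K \leq C$,
\[
    K \trianglelefteq \Gamma \iff \sigma(K) = K.
\]

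Because $C$ is abelian, $\delta \colon C \to C$, $\delta(g) = \sigma(g) g^{-1}$, is a homomorphism with $\Ker \delta = Z(\Gamma)$ and image $\Gamma'$, and $\sigma(K) = K$ is equivalent to $\delta(g) \in \langle g \rangle$ for $K = \langle g \rangle$. This gives the implication $\Leftarrow$ at once: if $K \leq Z(\Gamma)$ then $\delta$ vanishes on $K$; and $\sigma$ inverts $z$ while fixing the central involutions $x^{2^{n-1}}, y^{2^{m-1}}$, so $\sigma$ restricts to inversion on $\Omega(\Gamma : \Gamma') = \langle z, x^{2^{n-1}}, y^{2^{m-1}} \rangle$, whence $\sigma(g) = g^{-1} \in \langle g \rangle$ for $K \leq \Omega(\Gamma : \Gamma')$. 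In either case $K \trianglelefteq \Gamma$.

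For the implication $\Rightarrow$ I would write $g = x^i y^j z^k$ in normal form; as $g \in C$, $i + j$ is even. A short computation from $x^{-1} y^j x = (yz)^j$ and $x^{-1} z x = z^{-1}$ gives $\delta(g) = z^{-2k}$ when $i, j$ are even and $\delta(g) = z^{1 - 2k}$ when $i, j$ are odd. Identifying $\Gamma/\Gamma' \cong C_{2^n} \times C_{2^m}$ shows $\Omega(\Gamma : \Gamma') = \set{ g \given g^2 \in \Gamma' }$ is the preimage of the $2$-torsion, so $g \in \Omega(\Gamma : \Gamma')$ exactly when $e := |\bar g| \leq 2$, where $\bar g$ is the image of $g$ in $\Gamma/\Gamma'$. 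Now assume $K = \langle g \rangle \trianglelefteq \Gamma$, i.e. $\delta(g) \in \langle g \rangle \cap \Gamma'$. If $i, j$ are odd, then $1 - 2k$ is odd, so $\delta(g)$ generates $\Gamma'$ and normality forces $\Gamma' \leq \langle g \rangle$; I contradict this by showing $\langle g \rangle \cap \Gamma' = 1$. Indeed here $e = 2^n = |g|$, and the unique involution $g^{2^{n-1}} = x^{2^{n-1}}$ of $\langle g \rangle$ (the $y$- and $z$-parts die since $n - 1 \geq m > \ell$) lies outside $\Gamma' = \langle z \rangle$. Hence $g$ must have $i, j$ even, so $g \in \langle x^2, y^2, z \rangle \cong C_{2^{n-1}} \times C_{2^{m-1}} \times C_{2^\ell}$; there $g^e = z^{ke}$ and $\langle g \rangle \cap \Gamma' = \langle z^{ke} \rangle$, and $z^{-2k} \in \langle z^{ke} \rangle$ becomes, via $2$-adic valuations, the condition $\min(\nu(k) + \nu(e), \ell) \leq \nu(k) + 1$, i.e. $\nu(e) \leq 1$ or $z^{2k} = 1$. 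This says precisely $g \in \Omega(\Gamma : \Gamma')$ or $g \in Z(\Gamma)$, completing $\Rightarrow$.

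The main obstacle is the odd case, where one must pin down $\langle g \rangle \cap \Gamma'$; this comes down to computing the order of $g$ and its unique involution in normal form. Here the difference between $G$ and $H$ matters most: the relation $b^{2^m} = a^{2^m}$ forces, for instance, $|ab| = 2^{n-1}$ rather than $2^n$, so for $H$ the orders and the involution $g^{|g|/2}$ must be recomputed from scratch (although the conclusion, that this involution carries a nontrivial $a$- or $b$-component and hence avoids $\Gamma' = \langle c \rangle$, is unchanged). The even case, by contrast, is routine $2$-adic valuation bookkeeping and is uniform in $G$ and $H$.
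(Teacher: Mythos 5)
Your core reduction is sound and is essentially the paper's: since $C_\Gamma(\Gamma')$ is abelian of index $2$ and $\Gamma=\langle C_\Gamma(\Gamma'),x\rangle$, normality of $K=\langle g\rangle$ is equivalent to $[g,x]\in\langle g\rangle$, and one then analyses this single membership condition by $2$-adic valuations. Your treatment of $G$ is complete and correct. The real difference from the paper is the choice of coordinates. The paper writes $g$ in the direct-product basis $C_\Gamma(\Gamma')=\langle\alpha^{-1}\beta\rangle\times\langle\delta\rangle\times\langle\gamma\rangle$ (with $\delta=y^2$ for $G$ and $\delta=a^2$ for $H$), which is valid verbatim for both groups; it computes $[g,\alpha]=\gamma^{i-2k}$ in that basis and extracts everything from the equation $[g,\alpha]=g^t$ componentwise, with no parity case split and no separate treatment of $H$. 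You use the presentation generators instead, and that is precisely what forces your even/odd dichotomy and the need to redo the computation for $H$.

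The one genuine inaccuracy is your closing claim that the even case is ``uniform in $G$ and $H$.'' It is not: in $H$ the relation $b^{2^m}=a^{2^m}$ gives $\langle a^2\rangle\cap\langle b^2\rangle\neq 1$, so $\langle a^2,b^2,c\rangle$ is not the internal direct product $C_{2^{n-1}}\times C_{2^{m-1}}\times C_{2^\ell}$ and the identity $g^e=c^{ke}$ fails. Using $b^2=a^2(a^{-1}b)^2c$ one finds $g=(a^2)^{(i+j)/2}(a^{-1}b)^jc^{k+j/2}$, hence $\langle g\rangle\cap H'=\langle c^{(k+j/2)e}\rangle$, and the condition to analyse becomes $\min\bigl(\nu(k+j/2)+\nu(e),\ell\bigr)\leq\nu(k)+1$. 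The desired equivalence still comes out (a short argument using $\nu(e)\geq m-\nu(j)$ together with $m>\ell$ excludes the spurious solutions), and the odd case for $H$ --- which you did flag --- also checks out, but both genuinely have to be written down; ``analogous'' is doing real work here. Switching to the paper's basis removes the problem at the source.
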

\begin{proof}
    \index{$\alpha$ : Group Element}
    \index{$\beta$ : Group Element}
    \index{$\gamma$ : Group Element}
    \index{$\delta$ : Group Element}
    \index{$g$ : Group Element}
    \index{$i$ : Nonnegative Integer}
    \index{$j$ : Nonnegative Integer}
    \index{$k$ : Nonnegative Integer}
    \index{$t$ : Nonnegative Integer}

    Set $\alpha = x$, $\beta = y$ and $\gamma = z$, if $\Gamma = G$ and $\alpha = a$, $\beta = b$ and $\gamma = c$, if $\Gamma = H$.
    Observe that $C_\Gamma(\Gamma')$ is a maximal and abelian subgroup of $\Gamma$ generated by $\alpha\beta$, $\alpha^2$, $\beta^2$ and $\gamma$.
    Set $\delta = y^2$ if $\Gamma = G$ and $\delta = a^2$, if $\Gamma = H$.
    Then
    \begin{align*}
        C_\Gamma(\Gamma') &= \langle \alpha^{-1}\beta \rangle \times \langle \delta \rangle \times \langle \gamma \rangle.
    \end{align*}
    Moreover, $\Omega(\Gamma:\Gamma')$ is generated by $\gamma$ and the socle of $\langle \alpha^{-1}\beta, \delta \rangle$ which is also central in $\Gamma$.
    Hence, the second condition implies the first.
    So we assume $K \trianglelefteq \Gamma$ and $K \not\leq Z(\Gamma)$ and prove $K \leq \Omega(\Gamma : \Gamma')$.

    Let $g$ be a generator of $K$ and write it as
    \begin{equation*}
        g = (\alpha^{-1}\beta)^i \delta^j \gamma^k \qquad (0 \leq i < |\alpha^{-1}\beta|,\ 0 \leq j < |\delta|,\ 0 \leq k < |\gamma|).
    \end{equation*}
    The first assumption $K \trianglelefteq \Gamma$ implies $K^\alpha = K$.
    As $g^\alpha = g[g, \alpha]$, there is an integer $t \geq 0$ such that $[g, \alpha] = g^t$.
    Since $[g, \alpha] = \gamma^{i - 2k}$, we obtain
    \begin{equation*}
        (\alpha^{-1}\beta)^{it} = 1,
        \quad
        \delta^{jt} = 1,
        \quad
        \gamma^{kt} = \gamma^{i - 2k}
    \end{equation*}
    and accordingly
    \begin{equation}\label{eq:3ieqsG}
        \nu(it) \geq \nu(|\alpha^{-1}\beta|),
        \quad
        \nu(jt) \geq \nu(|\delta|),
        \quad
        \nu(kt - i + 2k) \geq \nu(|\gamma|).
    \end{equation}
    The second assumption $K \not\leq Z(\Gamma)$ implies $[g, \alpha] \neq 1$ as $\Gamma = \langle \alpha, C_\Gamma(\Gamma') \rangle$.
    So $\gamma^{kt} \neq 1$ and
    \begin{equation}\label{eq:1ieqG}
        \nu(|\gamma|) > \nu(kt).
    \end{equation}
    From \eqref{eq:1ieqG}, the first of \eqref{eq:3ieqsG} and $|\alpha^{-1}\beta| > |\gamma|$, we obtain $\nu(i) \geq \nu(|\alpha^{-1}\beta|) - \nu(t) > \nu(|\gamma|) - \nu(t) \geq \nu(2k)$, which implies $\nu(i - 2k) = \nu(2k)$.
    From \eqref{eq:1ieqG} and the third of \eqref{eq:3ieqsG}, we also obtain $\nu(kt - i + 2k) > \nu(kt)$, which implies $\nu(i - 2k) = \nu(kt - kt + i - 2k) = \nu(kt)$.
    These yield $\nu(2k) = \nu(kt)$, and hence $\nu(t) = 1$.
    Thus $\langle g^2 \rangle = \langle g^t \rangle = \langle [g, \alpha] \rangle$ and $K \leq \Omega(\Gamma : \Gamma')$.
\end{proof}

\begin{lem}\label{lem:diff}
    \index{$G$ : Finite Group}
    \index{$H$ : Finite Group}

    Let $G$ and $H$ be the groups defined in \cref{main:A}.
    Then
    \begin{equation*}
        |\CS(H)| - |\CS(G)| = (n-m)2^{m-1}(2^{\ell-1}-1).
    \end{equation*}
\end{lem}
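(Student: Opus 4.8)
The plan is to compute $|\CS(\Gamma)|$ for $\Gamma\in\{G,H\}$ through the abelian subgroup $C:=C_\Gamma(\Gamma')$. With the notation $\alpha,\beta,\gamma,\delta$ of the proof of \cref{lem:normalGH}, this is a maximal abelian subgroup of index $2$ with $C=\langle\alpha^{-1}\beta\rangle\times\langle\delta\rangle\times\langle\gamma\rangle$, so that $C_G(G')\cong C_{2^n}\times C_{2^{m-1}}\times C_{2^\ell}$ and $C_H(H')\cong C_{2^{n-1}}\times C_{2^m}\times C_{2^\ell}$. Since $\Gamma/\Gamma'$ is abelian, every conjugacy class of cyclic subgroups has a well-defined image in $\Gamma/\Gamma'$, and I would first split $\CS(\Gamma)$ into the classes whose members lie in $C$ (the \emph{inside} classes) and those that do not (the \emph{outside} classes). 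Throughout I write $\bar{\phantom{x}}$ for images under $\Gamma\to\Gamma/\Gamma'$, so that $\bar C=C/\Gamma'$, and I use that for an abelian group the conjugacy classes of cyclic subgroups are just the cyclic subgroups.

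For the inside classes I would use that conjugation by a fixed $\alpha\in\Gamma\setminus C$ induces an involution on the set of cyclic subgroups of $C$ (the abelian group $C$ acting trivially), so Burnside's lemma gives $\tfrac12\bigl(|\CS(C)|+\mathrm{Fix}\bigr)$ inside classes, where $\mathrm{Fix}$ counts the cyclic subgroups of $C$ fixed by $\alpha$, i.e.\ normal in $\Gamma$. By \cref{lem:normalGH} these are exactly the cyclic subgroups contained in $Z(\Gamma)\cup\Omega(\Gamma:\Gamma')$, so inclusion--exclusion gives $\mathrm{Fix}=|\CS(Z(\Gamma))|+|\CS(\Omega(\Gamma:\Gamma'))|-|\CS(Z(\Gamma)\cap\Omega(\Gamma:\Gamma'))|$. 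Inspecting generators (using \cref{lem:x2y2} for the central ones), I expect $Z(G)\cong Z(H)\cong C_{2^{n-1}}\times C_{2^{m-1}}\times C_2$, $\Omega(G:G')\cong\Omega(H:H')\cong C_{2^\ell}\times C_2\times C_2$, and $Z(\Gamma)\cap\Omega(\Gamma:\Gamma')\cong C_2^{\,3}$ for both groups; hence $\mathrm{Fix}$ is the same for $G$ and $H$ and the inside contribution to the difference collapses to $\tfrac12\bigl(|\CS(C_H)|-|\CS(C_G)|\bigr)$, which I evaluate with \cref{lem:count}.

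The outside classes are the crux. Writing a generator as $g=w\gamma^r$ with $w=\alpha^s\beta^t$ a lift of a generator of $\bar K:=\overline{\langle g\rangle}\not\le\bar C$ (so $s+t$ is odd), the fact that $w$ inverts $\Gamma'=\langle\gamma\rangle$ gives $g^2=w^2$ and, for $h\in C$, the identity $(w\gamma^r)^{h}=w\gamma^{\,r+j}$ where $\gamma^{j}=[w,h]$. The key computation is that $[w,C]=\Gamma'$: the commutator pairing $\Gamma/\Gamma'\times\Gamma/\Gamma'\to\Gamma'/\gamma_3(\Gamma)$ sends $(w,\alpha^{-1}\beta)$ to the class of $\gamma^{\,s+t}$, which is a generator since $s+t$ is odd. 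Thus conjugation by $C$ alone permutes the subgroups $\langle w\gamma^r\rangle$ $(r\in\ZZ/2^\ell\ZZ)$ transitively, so each outside image $\bar K$ contributes exactly one conjugacy class. Therefore the number of outside classes equals the number of cyclic subgroups of $\Gamma/\Gamma'$ not contained in $\bar C$, that is $|\CS(\Gamma/\Gamma')|-|\CS(\bar C)|$. As $G/G'\cong H/H'\cong C_{2^n}\times C_{2^m}$, while $\bar C_G\cong C_{2^n}\times C_{2^{m-1}}$ and $\bar C_H\cong C_{2^{n-1}}\times C_{2^m}$, the outside contribution to the difference is $|\CS(\bar C_G)|-|\CS(\bar C_H)|$, a two-generator count obtained from \cref{lem:count} with $\ell=0$.

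Assembling the pieces, $|\CS(H)|-|\CS(G)|=\tfrac12\bigl(|\CS(C_H)|-|\CS(C_G)|\bigr)+\bigl(|\CS(\bar C_G)|-|\CS(\bar C_H)|\bigr)$, and I would finish by the arithmetic of \cref{lem:count}: the first term evaluates to $(n-m)2^{m+\ell-2}$ and the second to $-(n-m)2^{m-1}$, whose sum is $(n-m)2^{m-1}(2^{\ell-1}-1)$. The main obstacle is the outside count: one must carry out the commutator computation establishing $[w,C]=\Gamma'$ in order to collapse each fibre over $\bar K$ to a single class, and separately verify the three isomorphisms that make $\mathrm{Fix}$ cancel, since without both cancellations the bookkeeping does not reduce to the clean pair of cyclic-subgroup counts above.
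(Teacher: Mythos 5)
Your proposal is correct and reaches the right answer, but the treatment of the classes outside the maximal abelian subgroup is genuinely different from the paper's. The inside-$C_\Gamma(\Gamma')$ part coincides with the paper's argument: both reduce to $\tfrac12\bigl(|\CS(C_\Gamma(\Gamma'))|+|\CS(Z(\Gamma))|+|\CS(\Omega(\Gamma:\Gamma'))|-|\CS(Z(\Gamma)\cap\Omega(\Gamma:\Gamma'))|\bigr)$ via \cref{lem:normalGH}, observe that the last three groups match up between $G$ and $H$, and feed the first into \cref{lem:count}. For the outside part the paper works at the level of elements, summing $|\Gamma:N_\Gamma(\langle g\rangle)|^{-1}\totient(|g|)^{-1}$ over $g\notin C_\Gamma(\Gamma')$; this forces it to determine the order of every such element, which is why it filters $y\Phi(G)$ by the sets $y\Omega_t(\Phi(G))$ and treats $x\Phi(G)$, $y\Phi(G)$ and $aC_H(H')$ separately. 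You instead fiber the outside classes over the cyclic subgroups of $\Gamma/\Gamma'$ not contained in $\bar C$ and show each fibre is a single conjugacy class, because conjugation by $\alpha^{-1}\beta$ shifts $w\gamma^r$ to $w\gamma^{r+j}$ with $j$ odd (the same computation as the paper's $g^{yx}=gz$, $h^{ba}=hc$). This collapses the outside count to $|\CS(\Gamma/\Gamma')|-|\CS(\bar C)|$, and since $G/G'\cong H/H'$ the difference becomes a rank-two instance of \cref{lem:count}; you never need the order distribution, which is the most delicate bookkeeping in the paper's version. Two small points you should nail down to make this airtight: every cyclic $K$ with prescribed image $\bar K$ really is of the form $\langle w\gamma^r\rangle$ (adjust the generator of $K$ so that it maps to the fixed generator $\bar w$; this is legitimate because the adjusting exponent is odd), and the three isomorphisms $Z(G)\cong Z(H)$, $\Omega(G:G')\cong\Omega(H:H')$, $Z\cap\Omega$ for both, which the paper also records and which a direct inspection of the presentations confirms. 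Your final arithmetic, $(n-m)2^{m+\ell-2}-(n-m)2^{m-1}=(n-m)2^{m-1}(2^{\ell-1}-1)$, agrees with the paper's.
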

\begin{proof}
    \index{$g$ : Group Element}
    \index{$i$ : Nonnegative Integer}
    \index{$j$ : Nonnegative Integer}
    \index{$k$ : Nonnegative Integer}
    \index{$t$ : Positive Integer}
    \index{$h$ : Group Element}

    We start the calculation, first for $G$.
    For better overview we record the conjugation action:
    \[y^x = yz, \ x^y = xz^{-1}, \ z^x = z^y = z^{-1}, \ x^z = xz^2, \ y^z = yz^2. \]

    \case{Inside maximal abelian subgroup of $G$}
    We first consider the elements in the maximal subgroup which is also abelian
    \begin{equation*}
        C_G(G') = \langle xy \rangle \times \langle y^2 \rangle \times \langle z \rangle \cong C_{2^n} \times C_{2^{m - 1}} \times C_{2^\ell}.
    \end{equation*}
    Every element in $C_G(G')$ has either class length $1$, when it lies in the center $Z(G)$, or $2$, as then the centralizer equals $C_G(G')$.
    By \cref{lem:normalGH}, we can count the number of conjugacy classes of cyclic subgroups of $G$ lying in $C_G(G')$ using the inclusion-exclusion principle as
    \begin{equation}\label{eq:CycGA}
        \begin{aligned}
             & \sum_{g \in C_G(G')} \frac{1}{|G : N_G(\langle g \rangle)|} \times \frac{1}{\totient(|g|)}                           \\
             & \qquad= \sum_{g \in Z(G) \cup \Omega(G : G')} \frac{1}{\totient(|g|)}
            + \frac{1}{2}\sum_{g \in C_G(G') \setminus (Z(G) \cup \Omega(G : G'))} \frac{1}{\totient(|g|)}                          \\
             & \qquad= \frac{1}{2}\big(|\CS(C_G(G'))| + |\CS(Z(G))| + |\CS(\Omega(G : G'))| - |\CS(\Omega(G : G') \cap Z(G))|\big).
        \end{aligned}
    \end{equation}

    \case{Outside maximal abelian subgroup of $G$}
    The elements of $G \setminus C_G(G') = x\Phi(G) \sqcup y\Phi(G)$ are of the form
    \begin{equation*}
        x(x^2)^i (y^2)^j z^k
        \quad \text{or} \quad
        y(x^2)^i (y^2)^j z^k
        \qquad
        (0 \leq i < 2^{n-1},\ 0 \leq j < 2^{m-1},\ 0 \leq k < 2^{\ell}).
    \end{equation*}
    Such an element $g$ satisfies $g^{yx} = gz$ which implies $g^G = gG'$.
    It also satisfies $z^{2^{\ell - 1}} \not\in \langle g \rangle$, so $G' \cap \langle g \rangle = 1$ and thus $N_G(\langle g \rangle) = C_G(g)$.
    For the elements of $x\Phi(G)$ note that they have a common order $2^n$.
    So the contribution is
    \begin{align*}
        \sum_{g \in x\Phi(G)} \frac{1}{|G : N_G(\langle g \rangle)|} \times \frac{1}{\totient(|g|)}
        = \frac{|x\Phi(G)|}{|G'| \times |(\ZZ/2^n\ZZ)^\times|}
        = \frac{2^{n - 1} \times 2^{m - 1} \times 2^\ell}{2^\ell \times 2^{n - 1}}
            = 2^{m - 1}.
    \end{align*}

    For the elements of $y\Phi(G)$ the calculation is more delicate as their orders are not uniform.
    Consider a chain of sets
    \begin{equation*}
        y\Omega_m(\Phi(G)) \subseteq y\Omega_{m + 1}(\Phi(G)) \subseteq \dotsb \subseteq y\Omega_{n - 1}(\Phi(G)) = y\Phi(G)
    \end{equation*}
    and observe that an element $g \in y\Omega_t(\Phi(G)) \setminus y\Omega_{t - 1}(\Phi(G))$ has order $2^t$ for each $m + 1 \leq t \leq n - 1$.
    Also an element $g \in y\Omega_m(\Phi(G))$ has order $2^m$.
    So the contribution is
    \begin{align*}
         &\sum_{g \in y\Phi(G)} \frac{1}{|G : N_G(\langle g \rangle)|} \times \frac{1}{\totient(|g|)}                                                                                                         \\
         &\qquad= \sum_{g \in y\Omega_m(\Phi(G))} \frac{1}{|G : N_G(\langle g \rangle)|} \times \frac{1}{\totient(|g|)}                                                                                       \\
         &\qquad\qquad + \sum_{t = m + 1}^{n - 1} \sum_{g \in y\Omega_t(\Phi(G)) \setminus y\Omega_{t - 1}(\Phi(G))} \frac{1}{|G : N_G(\langle g \rangle)|} \times \frac{1}{\totient(|g|)}                    \\
         &\qquad= \frac{|y\Omega_m(\Phi(G))|}{|G'| \times |(\ZZ/2^m\ZZ)^\times|} + \sum_{t = m + 1}^{n - 1} \frac{|y\Omega_t(\Phi(G)) \setminus y\Omega_{t - 1}(\Phi(G))|}{|G'| \times |(\ZZ/2^t\ZZ)^\times|} \\
         &\qquad= \frac{2^m \times 2^{m - 1} \times 2^\ell}{2^\ell \times 2^{m - 1}} + \sum_{t = m + 1}^{n - 1} \frac{2^{t - 1} \times 2^{m - 1} \times 2^\ell}{2^\ell \times 2^{t - 1}}                      \\
         &\qquad= 2^m + (n - m - 1)2^{m - 1}.
    \end{align*}
    The overall contribution of $G \setminus C_G(G')$ to $|\CS(G)|$ is
    \begin{equation}\label{eq:CycGNonA}
        \begin{aligned}
             & \sum_{g \in G \setminus C_G(G')} \frac{1}{|G : N_G(\langle g \rangle)|} \times \frac{1}{\totient(|g|)} \\
             & \qquad= \sum_{g \in x\Phi(G)} \frac{1}{|G : N_G(\langle g \rangle)|} \times \frac{1}{\totient(|g|)}
            + \sum_{g \in y\Phi(G)} \frac{1}{|G : N_G(\langle g \rangle)|} \times \frac{1}{\totient(|g|)}             \\
             & \qquad= 2^m + (n-m)2^{m-1}.
        \end{aligned}
    \end{equation}

    For the calculation in $H$ we also record the conjugacy action, which is essentially the same as for $G$: \[b^a = bc, \ a^b = ac^{-1}, \ c^a = c^b = c^{-1}, \ a^c = ac^2, \ b^c = bc^2.
    \]

    \case{Inside maximal abelian subgroup of $H$}
    The arguments are very similar to the case of the maximal abelian subgroup of $G$, so we make them shorter.
    First note that
    \begin{equation*}
        C_H(H') = \langle a^2 \rangle \times \langle a^{-1}b \rangle \times \langle c \rangle \cong C_{2^{n - 1}} \times C_{2^m} \times C_{2^\ell}
    \end{equation*}
    is a maximal subgroup of $H$ which is abelian.
    The class length of an element in $C_H(H')$ is $1$, if it is central in $H$, and $2$ otherwise.
    By \cref{lem:normalGH}, again by the inclusion-exclusion principle, we obtain the contribution to $|\CS(H)|$ as
    \begin{equation}\label{eq:CycHB}
        \begin{aligned}
             & \sum_{h \in C_H(H')} \frac{1}{|H : N_H(\langle h \rangle)|} \times \frac{1}{\totient(|h|)}                           \\
             & \qquad= \sum_{h \in Z(H) \cup \Omega(H : H')} \frac{1}{\totient(|h|)}
            + \frac{1}{2}\sum_{h \in C_H(H') \setminus (Z(H) \cup \Omega(H : H'))} \frac{1}{\totient(|h|)}                          \\
             & \qquad= \frac{1}{2}\big(|\CS(C_H(H'))| + |\CS(Z(H))| + |\CS(\Omega(H : H'))| - |\CS(\Omega(H : H') \cap Z(H))|\big).
        \end{aligned}
    \end{equation}

    \case{Outside maximal abelian subgroup of $H$}
    The elements in $H \setminus C_H(H') = aC_H(H')$ are of the form
    \begin{equation*}
        a(a^2)^i (a^{-1}b)^j c^k
        \qquad
        (0 \leq i < 2^{n-1},\ 0 \leq j < 2^m,\ 0 \leq k < 2^\ell).
    \end{equation*}
    Such an element $h$ satisfies $h^{ba} = hc$ which implies $h^H = hH'$ and its order equals $2^n$.
    It also satisfies $c^{2^{\ell - 1}} \not\in \langle h \rangle$, so $H' \cap \langle h \rangle = 1$ and thus $N_H(\langle h \rangle) = C_H(h)$.
    The overall contribution to $|\CS(H)|$ is
    \begin{equation}\label{eq:CycHNonB}
        \begin{aligned}
             & \sum_{h \in H \setminus C_H(H')} \frac{1}{|H : N_H(\langle h \rangle)|} \times \frac{1}{\totient(|h|)} \\
             & \qquad= \frac{|H \setminus C_H(H')|}{|H'| \times |(\ZZ/2^n\ZZ)^\times|}
            = \frac{2^{n - 1} \times 2^m \times 2^\ell}{2^\ell \times 2^{n - 1}}
            = 2^m.
        \end{aligned}
    \end{equation}

    \case{Comparing the values}
    We first note that
    \begin{alignat*}{2}
         &Z(G)           \cong Z(H)                               & &\cong C_{2^{n - 1}} \times C_{2^{m - 1}} \times C_2 \\
         &\Omega(G : G') \cong \Omega(H : H')                     & &\cong C_2 \times C_2 \times C_{2^\ell}              \\
         &\Omega(G : G') \cap Z(G) \cong \Omega(H : H') \cap Z(H) & &\cong C_2 \times C_2 \times C_2.
    \end{alignat*}
    So, comparing the expressions in \eqref{eq:CycGA} and \eqref{eq:CycHB}, the groups are pairwise isomorphic except for $C_G(G')$ and $C_H(H')$ themselves.
    Hence, the difference between the numbers of conjugacy classes of cyclic groups of $H$ and $G$, which equals $\eqref{eq:CycHB} + \eqref{eq:CycHNonB} - \eqref{eq:CycGA} - \eqref{eq:CycGNonA}$  is
    \begin{equation}\label{eq:CycFirstDiff}
        |\CS(H)| - |\CS(G)| = \frac{1}{2}\big(|\CS(C_H(H'))| - |\CS(C_G(G'))|\big) - (n-m)2^{m-1}.
    \end{equation}
    Recall that $C_H(H') \cong C_{2^{n-1}} \times C_{2^{m}} \times C_{2^\ell}$ and $C_G(G') \cong C_{2^n} \times C_{2^{m-1}} \times C_{2^\ell}$.
    It follows from \cref{lem:count} that
    \begin{align*}
         &|\CS(C_H(H'))| - |\CS(C_G(G'))|                                                                  \\
         &\qquad= ((n - m - 1)2^{m + \ell} + (2^{2\ell + 1} + 2^{2\ell})(2^{m - \ell} - 1))                \\
         &\qquad\qquad - ((n - m + 1)2^{m + \ell - 1} + (2^{2\ell + 1} + 2^{2\ell})(2^{m - \ell - 1} - 1)) \\
         &\qquad= (n - m)2^{m + \ell - 1}.
    \end{align*}
    Hence substituting $|\CS(C_H(H'))| - |\CS(C_G(G'))|$ in \eqref{eq:CycFirstDiff} gives
    \begin{equation*}
        |\CS(H)| - |\CS(G)| = (n-m)2^{m-1}(2^{\ell-1}-1).
    \end{equation*}
\end{proof}

\begin{proof}[Proof of \cref{prop:unit}]
    To see that $\CC G \cong \CC H$ note that both $G$ and $H$ posses a maximal subgroup which is abelian, namely $C_G(G')$ and $C_H(H')$, respectively.
    Hence by \cite[Theorem~12.11]{Isaacs76} the degrees of irreducible characters are either one or two.
    Then the Wedderburn decompositions of these algebras are
    \begin{align*}
        \CC G &\cong |G/G'|\CC \times \frac{1}{4}(|G|-|G/G'|)M_2(\CC), \\
        \CC H &\cong |H/H'|\CC \times \frac{1}{4}(|H|-|H/H'|)M_2(\CC)
    \end{align*}
    where $M_2(\CC)$ denotes the matrix algebra of degree~$2$ over~$\CC$.
    As $|G| = |H|$ and $|G/G'| = |H/H'|$ we have $\CC G \cong \CC H$.

    To see that $\QQ G \not\cong \QQ H$ note that the difference of the numbers of conjugacy classes of cyclic subgroups
    \begin{equation*}
        |\CS(H)| - |\CS(G)| = (n - m)2^{m - 1}(2^{\ell - 1} - 1)
    \end{equation*}
    is positive by \cref{lem:diff}, $n > m$ and $\ell \geq 2$.
    As the number of conjugacy classes of cyclic subgroups coincides with the number of Wedderburn components of the rational group algebra of a finite group \cite[Corollary~7.1.12 (Artin)]{JespersDelRio16} this shows that $\QQ G \not\cong \QQ H$.
\end{proof}

Brauer~\cite[Problem~$2^*$]{Brauer63} asked whether there are non-isomorphic finite groups that have isomorphic group algebras over every field.
Dade~\cite{Dade71} discovered an example among finite solvable groups whose orders are divisible by exactly two distinct primes.
Although our groups $G$ and $H$ have isomorphic group algebras over every field of characteristic~$2$ by \cref{main:A}, these groups can be distinguished over the rationals by \cref{prop:unit}.
We conclude by posing the following problem (see also \cite[Problem~8.3]{Margolis22}).
\begin{prob}
    \index{$p$ : Prime Number}

    Let $p$ be a prime.
    Are there non-isomorphic finite $p$-groups that have isomorphic group algebras over every field?
\end{prob}

\bibliographystyle{abbrv}
\bibliography{references}



\end{document}